\pgfplotsset{compat=1.11}
	\newtheoremstyle{break}{3pt}{3pt}{\normalfont}{}{\bfseries}{}{\newline}{}
	\theoremstyle{break}
	\newmdtheoremenv[style=mystyle]{thm}{Theorem}[section]
	\newmdtheoremenv[style=mystyle]{thm*}{Theorem}
	\newmdtheoremenv[style=mystyle]{cor}[thm]{Corollary}
	\newmdtheoremenv[style=mystyle]{lem}[thm]{Lemma}
	\newmdtheoremenv[style=mystyle]{prop}[thm]{Proposition}
	\newmdtheoremenv[style=mystyle]{dfn}[thm]{Definition}
	\theoremstyle{plain}
	\newtheorem*{ex*}{Example}
	\theoremstyle{remark}
	\newtheorem{rem}[thm]{Remark}
	\newtheorem*{rem*}{Remark}
	\newtheorem{assum}[thm]{Assumption}
\newenvironment{bew*}{\vspace{-1.5ex}}{\hspace{1cm}\hspace*{\fill}$\Box$\\%
}
\let\emph\textbf                               
\numberwithin{equation}{section}
\renewcommand{\theequation}{\arabic{section}.\arabic{equation}}
\providecommand{\keywords}[1]{\textbf{Key words:} #1}
\providecommand{\AMS}[1]{\textbf{AMS subject classifications:} #1}
\author{Lukas Ostrowski\thanks{Institute of Applied Analysis and Numerical Simulation, University of Stuttgart
  (\href{mailto:Lukas.Ostrowski@mathematik.uni-stuttgart.de}{Lukas.Ostrowski@mathematik.uni-stuttgart.de).}}
\and Christian Rohde\thanks{Institute of Applied Analysis and Numerical Simulation, University of Stuttgart,  (\href{mailto:Christian.Rohde@mathematik.uni-stuttgart.de}{Christian.Rohde@mathematik.uni-stuttgart.de}).}}
\title{Compressible multi-component flow in porous media with Maxwell-Stefan diffusion}
\date{}
\begin{document}
\maketitle

\begin{abstract}
We introduce a Darcy-scale model to describe compressible  multi-component  flow in a fully saturated porous medium. In order to capture cross-diffusive effects between the different species correctly, we make use of the Maxwell--Stefan theory in a thermodynamically consistent way.\\  For inviscid flow the model  turns out to be  a nonlinear system of hyperbolic balance laws.
We  show  that the  dissipative structure of the Maxwell-Stefan operator  permits to  guarentee    the existence of global classical 
solutions for initial data close to equilibria.  Furthermore it is proven  by relative entropy techniques that solutions of the Darcy-scale model tend in a  certain long-time regime to solutions of a parabolic limit system.
\end{abstract}

\begin{center}
\parbox{0.9\linewidth}{
\keywords{compressible porous media flow, Maxwell--Stefan diffusion, classical wellposedness, relative entropies}

\AMS{35L65,76N10,76S05}
}
\end{center}

\section{Introduction}
Multi-component flows in porous media appear in various fields of applications such as fuel cells, oxygen sensors, and respiratory airways \cite{birgersson2003reduced,MR2777658,Haberman20043617,You2006219}. To highlight the modeling challenge let us focus on the last example. As stated in \cite{MR2777658}, the bronchial tree can be divided into two parts. In the lower part the velocity of the air is very small, such that the dynamics of the gas mixture is mainly dictated by diffusive effects. For the treatment of certain diseases of the lung, a gas mixture (Heliox) is used to improve the patient's well-being. Mathematical models can be used to analyze how to achieve the greatest benefit for the patient. In this situation the classical Fickian diffusion law is too simplistic. 
Important effects, for instance uphill diffusion (\cite{C4CS00440J}), cannot be covered by this approach. By uphill diffusion we mean flux from regions of low concentrations to ones with high concentration, see \cite{Boudin20121427} and references therein. 
Let us additionally note that Duncan \& Toor have given an experimental example of a three-component gas mixture in \cite{AIC:AIC690080112}, which clearly demonstrates the uphill diffusion effect.
A generalization of the Fickian approach is needed which roots in the  classical works of Maxwell \cite{Maxwell1866} and Stefan \cite{Stefan1871}. It has led  to the concept which is nowadays called Maxwell--Stefan diffusion exploiting binary interactions between different species of the mixture. This approach captures more complex diffusive effects, but leads to a coupled nonlinear system of partial differential equations and is therefore mathematically more challenging.

In this paper we provide a mathematical model for compressible multi-component flow in fully saturated porous media on the Darcy-scale. This model takes the form of a nonlinear hyperbolic balance law, therefore classical solutions might fail to exist globally \cite{Dafermos:1315649}. However, we show that dissipative effects of the Maxwell--Stefan diffusion and porous media friction suffice to ensure the classical wellposedness for initial data close to equilibrium. Note that we account for the effect of the solid skeleton in the porous medium like in dusty gas models from  \cite{Mason1983,krishna}. It is regarded as an additional component of the mixture with zero velocity and constant density. 
In contrast to these dusty gas models, which rely on the   kinetic theory of gases, we use the continuum thermodynamics framework as developed in \cite{Bothe2014a}.
The solutions of the resulting system automatically satisfy an entropy condition and hence the second law of thermodynamics. 
If we set the Maxwell--Stefan coupling terms to zero, we obtain a system of uncoupled equations which correspond to compressible Euler equations with friction. This model is used for single-component flow through porous media. It has been shown in \cite{Fang2009244,Hsu2016708,Huang2005,Tzavaras,zeng} that the solutions to this system tend in a long-time limit to the solution of a parabolic porous media equation. We establish a corresponding result for the multi-component case that results in a parabolic system of porous media equations. This system is similar to the multi-component system of \cite{MR3090648}.

The paper is organized as follows. In Section \ref{sec:mod} we derive the governing equations for compressible multi-component flow in porous media exploiting the continuum thermodynamics framework. We start with multiple Euler equations with friction, which are coupled by a right hand side using Maxwell--Stefan cross-diffusion terms.
In the following Section \ref{sec3} we investigate the existence of smooth solutions to this system. The dissipative effects due to Maxwell--Stefan diffusion and friction are shown to fit exactly into the wellposedness theory of \cite{yong} for general hyperbolic balance laws (see Theorem \ref{thmregu}).
The second major goal of the paper on the existence of a parabolic limit system is pursued in Section \ref{sec:limit}. We use a relative entropy framework to prove our main Theorem \ref{thm:paraboliclimit}. For this purpose we adapt techniques from \cite{Tzavaras}, where the convergence of the compressible Euler system with friction to the porous media equation has already been proven. We finally conclude in Section \ref{sec:conclusions}.
 
\section{Compressible Flow in Porous Media}\label{sec:mod}
We first review  in Section \ref{sec:singlecomp} a single-component model and highlight available analytical results which we will generalize to 
the multi-component case in the remainder of the paper. The multi-component model itself  is derived using fundamental thermodynamical principles in 
Section \ref{sec:multicomp}. It follows the presentation in \cite{Bothe2014a} for free flow problems extending the classical Fickian diffusion modeling to a Maxwell--Stefan approach.
\subsection{Single-Component Flow\label{sec:singlecomp}}
Compressible single-component flow with friction induced by the resistance of the solid skeleton in a porous medium 
can be described  on a macroscopic averaged scale by the Euler--Darcy model, see \cite{Marcati1990} and references therein. We consider the isothermal situation with constant temperature. For $t\in (0,T)$, $T>0$, and  $\vec{x}$ from 
the entire $ \R^d$ 
the unknowns mass density $\rho = \rho(\vec{x},t)>0$
and momentum $\vec{m} = \rho \vec{v} \in \R^d$   with velocity $\vec{v}=\vec{v}(\vec{x},t) \in \R^d$ satisfy in this case  the  system 
\begin{align}
\begin{split}
\label{Euler-Darcy}
 \del_t \rho + \div(\vec{m}) &= 0, \\
\del_t \vec{m}+ \div\left(\frac{\vec{m} \vec{m}^\top}{\rho} +p(\rho)\mathcal{I}_d\right) &= -  M\vec{m}
\end{split} & \text{in } \R^d \times (0,T).
\end{align}
Here $p=p(\rho)$ is the smooth pressure function, $M>0$ is the mobility constant, and $\mathcal{I}_d$ is the $d$-dimensional unit matrix. Note that we use in \eqref{Euler-Darcy} the  same notation for the divergence operator as applied to  vector-  or matrix-valued functions, see 
Appendix  \ref{sec:appA}.\\
It is well-known that \eqref{Euler-Darcy} is a hyperbolic system of nonlinear balance laws as long as the pressure is monotonically increasing.
Shock-type singularities might evolve in finite time regardless of the initial data's regularity.  It is of overall importance that \eqref{Euler-Darcy} 
is endowed with a entropy-entropy flux pair  (see \eqref{pairone} below) which can be used to ensure an appropriate form of the second law of thermodynamics for classical as well as 
weak solutions. Precisely,    
solutions of \eqref{Euler-Darcy} are required to satisfy 
the entropy inequality
\begin{equation*}
\del_t \eta(\rho,\vec{m}) + \div \vec{q}(\rho,\vec{m}) \leq 0 \quad \text{in } \mathcal{D}'(\R^d\times (0,T))
\end{equation*}
for the entropy-entropy flux pair $(\eta,\vec{q})$ given by
\begin{equation}\label{pairone}
		\eta(\rho,\vec{m}) = \frac{1}{2} \frac{|\vec{m}|^2}{\rho} +\rho \psi(\rho), \qquad 
		\vec{q}(\rho,\vec{m})    =  \frac{1}{2} \vec{m}\frac{|\vec{m}|^2}{\rho^2} + \vec{m} ( \psi(\rho) +  \rho  \psi'(\rho)).
\end{equation}
Here the free energy density $ \rho\psi$ is determined from the Gibbs--Duhem relation
\begin{align}\label{eq:free_energy_density}
\rho\psi(\rho) = (\rho\psi)'(\rho) - p(\rho). 
\end{align}
While the  nonlinear flux in \eqref{Euler-Darcy}  can drive shock waves, the dissipative effect of the friction term   might suffice to counteract the destabilizing effect of the  flux. Depending on the initial data and the size of $M$ the initial value problem  (IVP) for \eqref{Euler-Darcy} 
can have in fact global smooth solutions (see e.g. \cite{Fang2009244}). We will show that a similar result holds for the multi-component case. Furthermore, the
the  dissipative friction effect leads to  certain limit regimes such that  \eqref{Euler-Darcy} changes type in the limit. 
We   consider a long-time and large-mobility regime in \eqref{Euler-Darcy}, i.e., the time $t$ is scaled by a small parameter $\eps > 0$ and the mobility $M$ by $\eps^{-1}$. After rescaling  \eqref{Euler-Darcy} and renaming the variables in an obvious way \eqref{Euler-Darcy} is recasted in the 
 form
\begin{equation}
	\label{Euler-Darcy-eps}
	\begin{array}{rcl}
		\eps \del_t \rho^\eps + \div(\vec{m}^\eps) &= &0, \\
		\eps \del_t(\vec{m}^\eps)+ \div\left(\dfrac{\vec{m}^\eps \vec{m}^{\eps\top}}{\rho^\eps} +p(\rho^\eps)\mathcal{I}_d\right)&=&  - \frac{1}{\eps} M\vec{m}^\eps 
	\end{array}\text{ in } \R^d \times (0,T). 
\end{equation}
In \cite{Tzavaras} (but see also \cite{Huang2005}) it has been shown that the sequence of densities ${\rho}_\eps >0$  solving the  initial value problems  for \eqref{Euler-Darcy-eps} converge for 
$\eps \to 0$ towards a solution $\bar\rho $ of the IVP for the porous medium equation
\begin{equation}\label{pm}
\del_t \bar\rho - M^{-1}\div (\nabla p(\bar\rho)) =0  \text{ in } \R^d \times (0,T).
\end{equation}
In other words the hyperbolic balance laws turn into a parabolic evolution with much  more regular solution behavior.
 We aim at a corresponding result  for the multi-component case (for the parabolic system  \eqref{Darcy-MS-limit} that reduces to \eqref{pm} in the one-component case).

\subsection{Multi-Component Flow\label{sec:multicomp}}
While single-component flow in a porous medium is well understood, much less is known for multi-component flow. 
As long as bulk viscosity is neglected 
  standard model approaches take the form of the 
Euler equations with a damping term in the momentum equations like \eqref{Euler-Darcy}.
However, in the case of porous media and multi-component gaseous mixtures, inter-component viscosity effects become important which do not occur 
in the single-component case.    
The classical Fickian approach does not suffice to describe these  diffusion phenomena. As a possible remedy we favor in this paper  a Maxwell--Stefan ansatz and,
in order to derive governing equations  in a thermodynamically consistent way,  follow the work of Bothe \& Dreyer \cite{Bothe2014a} for free 
multi-component flow.

\subsubsection{Multi-Component Flow and Maxwell--Stefan Diffusion\label{sec:MSfree}}
Let a fluid mixture
 consist of $n\in \N$ components $A_1,...,A_n$ with corresponding mass densities $\rho_i=\rho_i(\vec{x},t)>0$ and velocities $\vec{v}_i = \vec{v}_i(\vec{x},t) \in \R^d,$ $i=1,...,n$. \\
We define the total mass density $\rho$    and the barycentric velocity $\vec{v}$   (not to be interchanged with the single-component case 
in Section \ref{sec:singlecomp}) as
\begin{align*} 
\rho:= \sum_{i=1}^{n}\rho_i, \quad \vec{v}:= \frac{1}{\rho} \sum_{i=1}^{n}\rho_i\vec{v}_i.
\end{align*}
Further, we define the diffusion velocities
\[ \vec{u}_i:= \vec{v}_i - \vec{v} \in \R^d. \]

We ignore mass exchange as well as  exterior forces. Restricting ourselves to the case of a simple mixture, the component pressures $p_i$ depend on $\rho_i$ only, i.e.~they satisfy
 $p_i = p_i(\rho_i)$. 
For $i=1,\dots,n$ we  start then from  the partial balances of mass and momentum given by
\begin{subequations}
\begin{align}
\del_t \rho_i + \div(\rho_i\vec{v}_i)&=0, \label{partialmass} \\ 
\del_t(\rho_i\vec{v}_i) + \div(\rho_i \vec{v}_i \vec{v}_i^\top+p_i(\rho_i)\mathcal{I}_d) &= \vec{f}_i. \label{partialmoment}
\end{align}
\end{subequations}

Here $\vec{f}_i \in \R^d$ states the momentum production due to diffusive mixing, later to be specified with the Maxwell--Stefan ansatz.
As a natural requirement  the conservation law for total momentum has to hold, which implies the condition
\begin{align*}
\sum_{i=1}^{n} \vec{f}_i = 0. 
\end{align*} 
The crucial part is now to find an expression for $\vec{f}_i$ such that %
with the (physical) entropy production $\zeta$  (see \eqref{zeta} below) 
the second law of thermodynamics holds true. 

We introduce for each component $A_i$  a strictly convex free energy density $ h_i(\rho_i)= \rho_i\psi_i(\rho_i)$ that relates to the partial pressure $p_i(\rho_i)$ via the Gibbs--Duhem equations (see \eqref{eq:free_energy_density} for the single-component velocity)
$$ h_i(\rho_i)+p_i(\rho_i)=\rho_i h_i'(\rho_i).$$

Thus, the strict convexity of $\rho_i\psi_i$ implies 
\begin{align*}
	p_i^\prime(\rho_i) > 0.
\end{align*} Moreover,  the function 
\begin{align} \label{simple2}
h(\rho_1,\dots,\rho_n) := \sum_{i=1}^n h_i(\rho_i)
\end{align}
acts as mixture free energy for simple mixtures.

For the special case of simple isothermal inviscid fluid mixtures without chemical reactions the entropy production  $\zeta$
of some solution $(\rho_1,\ldots,\rho_n, \vec{m}_1^\top,\ldots,\vec{m}_n^\top)$ of \eqref{partialmass}, \eqref{partialmoment}
is derived in \cite{Bothe2014a} and reads as
\begin{align}\label{zeta}
\zeta = -\sum_{i=1}^n \vec{u}_i \cdot \vec{f}_i.
\end{align}

The nonnegativity of the entropy production $\zeta$  leads  then to the requirement
\begin{align}
\label{entropy2}
-\sum_{i=1}^{n-1}(\vec{u}_i-\vec{u}_n)\cdot\vec{f}_i \geq 0.
\end{align}
In the following we make the Maxwell--Stefan ansatz for $\vec{f}_i$ to guarantee that \eqref{entropy2} holds true. Let 
\begin{align}\label{eq:defTtilde}
\tilde{\mathcal{T}}:=(\tau_{ij})_{i,j=1}^{n-1}\succ 0, \text{ with } \tau_{ij} = \tau_{ij}(\rho_i,\rho_j)
\end{align}
be a positive-definite matrix.
With \eqref{eq:defTtilde} we set
\begin{align}
\label{msansatz}
\vec{f}_i = - \sum_{j=1}^{n-1}\tau_{ij}(\vec{u}_j-\vec{u}_n), \quad i=1,...,n-1.
\end{align}

In order to make the right hand side in \eqref{msansatz} symmetric regarding the components, we extend 
$\tilde{\mathcal{T}} \in \R^{(n-1) \times (n-1)}$ to the \textit{Maxwell--Stefan matrix} $\mathcal{T} := (\tau_{ij})_{i,j=1}^n \in \R^{n \times n}$ \cite{taylorkrishna} by
\begin{align}\label{extendedmax}
\tau_{nj}= -\sum_{i=1}^{n-1}\tau_{ij}, \quad j=1,...,n-1, \qquad \tau_{in}=-\sum_{j=1}^{n-1}\tau_{ij}, \quad i=1,...,n.
\end{align}
Additionally let 
\begin{align}\label{tauleq0}
\tau_{ij}\leq 0 \text{ for all } i\neq j .
\end{align} 

In the case of binary interactions the matrix $\mathcal{T}$ is symmetric and can be shown to be positive semi-definite provided \eqref{eq:defTtilde} holds, see \cite{Bothe2014a}.

The following ansatz for the components of $\tilde{\mathcal T}$  is made to match the requirements \eqref{extendedmax}, \eqref{tauleq0}. 
For some numbers  $\lambda_{ij} \ge 0  $ with  $\lambda_{ij}(\rho_i,\rho_j)=\lambda_{ji}(\rho_j,\rho_i)$ for $i\neq j$  
we define 
\begin{align}\label{lambda}
\tau_{ij}=-\lambda_{ij}(\rho_i,\rho_j)\rho_i\rho_j \qquad (i\neq j).  
\end{align} 
With $\lambda_{ii}= - \sum_{j=1, j\neq i}^n \lambda_{ij}\frac{\rho_j}{\rho_i}$  we  introduce the  negative  semi-definite matrix
\begin{equation}\label{def:lambda}
\Lambda = \Lambda (\vec{r}) =  {\Big(\lambda_{ij}(\rho_i,\rho_j) \Big)}_{i,j=1}^n\in \R^{n\times n}.   
\end{equation}
With the definitions \eqref{extendedmax} we infer from \eqref{msansatz} the relations
\begin{subequations}\label{2.17}
\begin{align}
\vec{f}_i &= - \sum_{j=1}^{n} \tau_{ij}(\vec{u}_j-\vec{u}_n), \quad i=1,...,n, \\
\sum_{j=1}^{n} \tau_{ij} &=0 ,  \hspace*{8.4em}i=1,...,n. \label{lambdasum}
\end{align}
\end{subequations}

Thus, by replacing $\vec{u}_n$ with $\vec{u}_i$ in \eqref{2.17}, we obtain a symmetrical version of \eqref{msansatz}, namely
\begin{align}\label{2.8}
\vec{f}_i = \sum_{j=1}^{n}\tau_{ij}(\vec{u}_i-\vec{u}_j),\quad i=1,...,n.
\end{align}

With \eqref{2.8} and the symmetry of $\mathcal{T}$ the entropy production \eqref{zeta} can be written as
\begin{align*}
\zeta = -\sum_{i=1}^{n} \vec{u}_i \cdot \vec{f}_i= -\frac{1}{2} \sum_{i,j=1}^{n} \tau_{ij}|\vec{u}_i-\vec{u}_j|^2.
\end{align*}
Obviously, condition \eqref{tauleq0} is necessary to achieve $\zeta \geq 0$.
Due to \eqref{lambda} the entropy production reads as
\begin{align}
\label{zetamitlambda}
\zeta = \frac{1}{2}\sum_{i,j=1}^n \lambda_{ij}(\rho_i,\rho_j)  \rho_i\rho_j |\vec{v}_i-\vec{v}_j|^2.
\end{align}

Finally, with the Maxwell--Stefan ansatz the constitutive law for the momentum production $\vec{f}_i$ results from \eqref{lambda} and \eqref{2.8} as
\begin{align}\label{fi}
\vec{f}_i = -  \sum_{j=1}^{n}\lambda_{ij}(\rho_i,\rho_j)\rho_i\rho_j(\vec{v}_i-\vec{v}_j).
\end{align}
Note that in \eqref{fi} the diffusion velocities are replaced by the velocities of the corresponding component.

With this result the partial momentum balances \eqref{partialmoment} read as
\begin{align}\label{eq:partialmoment_MS}
\del_t(\rho_i\vec{v}_i)+\div(\rho_i\vec{v}_i \vec{v}_i^\top+p_i(\rho_i)\mathcal{I}_d)= -\sum_{j=1}^{n}\lambda_{ij}(\rho_i,\rho_j)\rho_i\rho_j(\vec{v}_i-\vec{v}_j).
\end{align}

\subsubsection{Compressible Multi-Component Flow in Porous Media}

So far we considered a free flow problem. 
We realize the porous medium matrix as a static component $A_{\mathrm{pm}}$ of the mixture with velocity $\vec{v}_{\mathrm{pm}}=0$ and density $\rho_{\mathrm{pm}}=\mathrm{const.}$\\
The component $A_{\mathrm{pm}}$ needs no equations for the mass  and momentum balance. However, we have to take into account the effects on the other components.
Hence, the sum from \eqref{fi}
extends to
\[ -\sum_{j=1}^{n}\lambda_{ij}(\rho_i,\rho_j)\rho_i\rho_j(\vec{v}_i-\vec{v}_j)-\lambda_{i,\mathrm{pm}}(\rho_i,\rho_{\mathrm{pm}})\rho_{\mathrm{pm}}\rho_i(\vec{v}_{i}-\vec{v}_{\mathrm{pm}}). \]
In the sequel we ignore the explicit dependence of $\lambda_{i,\text{pm}}$ on the component densities $\rho_i$ and proceed with the mobility constants 
\[ M_i=M_i(\rho_{\mathrm{pm}})\coloneqq \lambda_{i,\mathrm{pm}}(\rho_{\mathrm{pm}})\rho_{\mathrm{pm}}. \]
Then we arrive at our final system which reduces in the single-component case to \eqref{Euler-Darcy}.\\
Define the density vector  $\vec{r} = (\rho_1,\ldots, \rho_n)^\top$ and with $\vec{m}_i = \rho_i\vec{v}_i$ for $i= 1,\ldots,n$ the momentum vector
 $\vec{m} = (\vec{m}_1^\top,\ldots, \vec{m}_n^\top )^\top  $.
We search for  the function   $U =  ( \vec{r}^\top,\vec{m}^\top)^\top$
with values in the state space 
\begin{align}\label{eq:statespace}
G = \R_+^n \times \R^{nd},
\end{align}  that
satisfies the multi-component Euler--Darcy system with Maxwell--Stefan type diffusion 
\begin{align}
\begin{split}
\label{Euler-MS}
\del_t \rho_i &+ \div(\vec{m}_i) = 0, \\
\del_t(\vec{m}_i)&+ \div\left(\frac{\vec{m}_i \vec{m}_i^\top}{\rho_i}+p_i(\rho_i)\mathcal{I}_d\right)  
 \\
 &\qquad = - M_i\vec{m}_i -  \sum_{j=1}^{n} \lambda_{ij}(\rho_i,\rho_j)(\rho_j\vec{m}_i-\rho_i\vec{m}_j)\end{split} & \text{ in } \R^d \times (0,T),
\end{align}
subject to the   initial condition 
\begin{align}\label{Euler-MS_ini}
U(\cdot,0) = U_0 := (\vec{r}_0^\top,\vec{m}_0^\top)^\top= (\rho_{1,0},\dots,\rho_{n,0},\vec{m}_{1,0}^\top,\dots,\vec{m}_{n,0}^\top)^\top
 & \text{ in } \R^d.
\end{align}
Due to the arguments from Section \ref{sec:MSfree} we observe 
that the second law of thermodynamics is automatically satisfied along smooth solution trajectories of \eqref{Euler-MS}, \eqref{Euler-MS_ini}. 
\begin{rem}\label{rem:model} 
\begin{enumerate}

\item When including the porous medium, the condition \eqref{lambdasum} contains the summand of the porous medium part as well. We neglect this fact. That means precisely that we neglect the effect of the porous medium on the conservation of momentum \eqref{fi}. This is in accordance with the single-component case. The effect of the porous medium has the character of a body force. 
\item It turns out in experiments that the $\lambda_{ij}$ are only weakly dependent on the mixture. Often affine functions suffice to describe this dependence, see \cite{taylorkrishna}.
\item The structure of the porous medium is only captured in the scalar parameter $M_i$. If the porous medium is not homogeneous and isotropic, one should allow for matrix-valued parameters with spatial dependence.
\item  The terms $M_i$ scale with  the density of the porous medium, which is significantly larger than the densities of a gaseous mixture.  Hence, typically it holds $M_i \gg \lambda_{ij}$.
\end{enumerate}
\end{rem}

Similar like in Section \ref{sec:singlecomp} we consider a long-time/large-mobility/large-diffusion regime for \eqref{Euler-MS}.
	To be precise, let  $\bar{x},\, \bar{t},\, \bar{\rho}>0,\, \bar{v},\, \bar{p},\, \bar{M},\,$ and $\bar{\lambda}$ be the characteristic scales of the corresponding quantities.  The long-time/large-mobility/large-diffusion  regime is now obtained  from
	\[\frac{\bar{x}}{\bar{v}\bar{t}}=O(\eps), \ \frac{\bar{p}}{\bar{v}^2\bar{\rho}}=O(1), \  \frac{\bar{M}\bar{x}}{\bar{v}}= O(\eps^{-1})\  \text{ and } \frac{\bar{\lambda}\bar{\rho}\bar{x}}{\bar{v}}= O(\eps^{-1}),
	\] using 
	 a small parameter $\eps>0$.
After rescaling  the system $\eqref{Euler-MS}$    and renaming the unknown as 
$U^\eps=(\vec{r}^{\eps\top}=(\rho_1^\eps,\ldots,\rho_n^\eps), \vec{m}^{\eps\top}=(\vec{m}_1^{\eps\top},\ldots, \vec{m}_n^{\eps\top} )    )^\top$   it reads in this regime as
\begin{align}
\label{Euler-Darcy-MS-eps}
\begin{split}
\eps \del_t \rho_i^\eps &+ \div(\vec{m}_i^\eps) = 0, \\
\eps \del_t\vec{m}_i^\eps &+ \div\left(\frac{\vec{m}_i^\eps \vec{m}_i^{\eps,\top}}{\rho_i^\eps} +p_i(\rho_i^\eps)\mathcal{I}_d\right) \\
& \qquad = - \frac{1}{\eps} M_i\vec{m}_i^\eps - \frac{1}{\eps} \sum_{j=1}^{n} \lambda_{ij}(\rho_j^\eps\vec{m}_i^\eps-\rho_i^\eps\vec{m}_j^\eps)
\end{split} & \text{in } \R^d \times (0,T),
\end{align}
with the ($\eps$-dependent) initial conditions 
\begin{align}\label{Euler-Darcy-MS-eps_ini}
\begin{split}  U^\eps(\cdot,0) &= U^\eps_0 := ( {\vec{r}^{\eps\top}_0},\vec{m}^{\eps\top}_0)^\top= (\rho^\eps_{1,0},\dots,\rho^\eps_{n,0},\vec{m}^{\eps\top}_{1,0},\dots,\vec{m}^{\eps\top}_{n,0})^\top
\end{split} & \text{ in } \R^d.
\end{align}
As for the single-component case we will show that the multi-component case admits global smooth solutions exploiting the dissipative effect due to 
friction \textit{and}  the Maxwell--Stefan diffusion. 
The other  major goal  is to prove  that the density component sequence ${\{\vec{r}^\eps\}}_{\eps >0}$  of  solutions of  the IVP \eqref{Euler-Darcy-MS-eps}, \eqref{Euler-Darcy-MS-eps_ini} converges for $\eps \to 0$  to the vector-valued density field  $\vec{\bar{r}}=(\bar{\rho}_1,...,\bar{\rho}_n)^\top  $ solving 
the system of  porous medium equations
\begin{align}
\label{Darcy-MS-limit}
\begin{split}
\del_t  \bar{\vec{r}} - \div \big( (\mathcal{B} (\vec{\bar{r}}) )^{-1}  \nabla\vec{p}( \vec{\bar{r}}) \big) &=0  \text{ in } \R^d \times (0,T),
\end{split}
\end{align}
subject to the initial conditions 
\begin{equation}
 \label{Darcy-MS-limit_ini}   \bar{\vec{r}}(\cdot,0) =  \bar{\vec{r}}_0\text{ in } \R^d.
\end{equation} 
In \eqref{Darcy-MS-limit}  
we used the vector-valued pressure
\begin{equation} \vec{p}(\vec{\bar{r}})=(p_1(\bar{\rho}_1),...,p_n(\bar{\rho}_n))^\top, \label{defvecp}
\end{equation}
and the matrices 
\begin{equation}
\begin{array}{rcl}
\mathcal{B}(\vec{\bar{r}})&=&\mathcal{\tilde{B}}(\vec{\bar{r}})  \otimes \mathcal{I}_d  \in \R^{nd\times nd},\\ [1.2ex]
  \mathcal{\tilde{B}}(\vec{\bar{r}})                         & =& \diag(M_i)-\diag(\bar{\rho}_i)\Lambda \in \R^{n\times n}. 
\end{array} \label{eq:defB}
\end{equation}
The matrix $\Lambda$ from \eqref{def:lambda} is negative semi-definite. This implies  that $ \mathcal{\tilde{B}}$  is positive definite
due  to $M_i>0$ and the positivity of the densities. In particular $\mathcal{B}$ turns then out to be positive definite as the Kronecker product of two
positive definite matrices. 
For 
the definition of the  generalized gradient/divergence operators in the 
system \eqref{Darcy-MS-limit}  and the Kronecker matrix product $\otimes$ in \eqref{eq:defB}   we refer to Appendix \ref{sec:appA}.

\begin{rem}
\begin{enumerate}	
\item	For the single component case $n=1$ the system \eqref{Darcy-MS-limit} reduces to the porous media equation \eqref{pm}.
\item If no porous medium is present, that is $M_i=0$, the system \eqref{Darcy-MS-limit} in this framework corresponds for perfect gas laws
  to the following version of the Maxwell--Stefan equations formulated for the molar concentrations $c_i$ often seen in the literature, e.g.~in\cite{MR3090648}: 
\begin{align}\label{eq:Maxwell--Stefan_con}
\begin{split}
\partial_t c_i + \div \vec{J}_i &= 0,\\
\nabla c_i &= - \sum_{j=1, j\neq i}^n \frac{c_j\vec{J}_i-c_i\vec{J}_j}{D_{ij}}.
\end{split}
\end{align}
\end{enumerate}
Here $D_{ij} = \dfrac{R}{c \mathcal{M}_i  \mathcal{M}_j \lambda_{ij}}$, with the ideal gas constant $R$, total molar concentration $c= \sum_{i=1}^n c_i$ and molar masses $\mathcal{M}_i$.
\end{rem}

\section{Existence of Smooth Solutions in Multiple Space Dimensions}\label{sec3}
The main result in this section is Theorem \ref{thmregu} on the classical wellposedness of the IVP for system \eqref{Euler-MS}. 
To this end  we  propose an entropy concept for \eqref{Euler-MS} and  adapt a result of Yong \cite{yong} on hyperbolic balance laws, see Appendix \ref{sec:appB}. 
It exploits dissipative effects of the  balance terms  that counteract  the development of   singularities driven by the 
hyperbolic flux \cite{Dafermos:1315649}. 
To state and prove our  main result Theorem \ref{thmregu}  below we  summarize all assumptions on the system \eqref{Euler-MS} 
according to the notations from Section \ref{sec:mod}.

\begin{assum}\label{assumption}
\begin{itemize}
\item[(i)] The functions  $\lambda_{ij} \in    C^\infty ((0,\infty)^2, \R), \ i,j=1,\dots,n$, satisfy  \eqref{lambda} and \eqref{lambdasum}.
\item[(ii)] The symmetric matrix $\Lambda(\vec{r})  =  {\Big(\lambda_{ij}(\rho_i,\rho_j) \Big)}_{i,j=1}^n$ is  negative semi-definite for all $\vec{r} \in (0,\infty)^n$. 
\item[(iii)] The free energy densities  $ h_i = \rho_i\psi_i \in C^3((0,\infty))$ are strictly convex for $i=1,\dots,n$.
\item[(iv)] The mobility constants $M_i$  are positive for $i=1,\ldots,n$.
\end{itemize}
\end{assum}

Theorem \ref{yongtheorem} applies to general hyperbolic balance laws. With
$U = ( \rho_1,\dots,\rho_n, ( \vec{m}_1^\top,\dots,\vec{m}_n^\top))^\top$  we can rewrite  \eqref{Euler-MS} in this form, that is 
\begin{equation}\label{genericFormc}
\del_tU + \sum_{\alpha=1}^d \del_{x_\alpha}F_\alpha(U)= S(U) = \begin{pmatrix} \vec{0}\\ \vec{s}(\vec{r},\vec{m}) \end{pmatrix}.
\end{equation}
The fluxes  $F_\alpha(U) \in \R^{(d+1)n}$  and  the source  $ \vec{s}(\vec{r},\vec{m})\in \R^{nd} $ are given by
\begin{equation} 
\begin{array}{rcl}
F_\alpha(U) &\!\!\!=& \!\!\!\left(
\vec{m}_1^{(\alpha)},  \dots , \vec{m}_n^{(\alpha)} ,
\frac{\vec{m}_1^{(\alpha)}}{\rho_1}\vec{m}_1^\top + p_1(\rho_1)\vec{e}_\alpha^\top,\dots,\frac{\vec{m}_n^{(\alpha)}}{\rho_n}\vec{m}_n^\top + p_n(\rho_n)\vec{e}_\alpha^\top
\right)^\top \\[3ex]
\vec{s}(\vec{r},\vec{m})&\!\!\!=&\!\!\!
\begin{pmatrix}
-M_1\vec{m}_1- \sum_{j=1}^{n}\lambda_{1j}(\rho_j\vec{m}_1-\rho_1\vec{m}_j) \\
-M_2\vec{m}_2- \sum_{j=1}^{n}\lambda_{2j}(\rho_j\vec{m}_2-\rho_2\vec{m}_j) \\
\vdots \\
-M_n\vec{m}_n- \sum_{j=1}^{n}\lambda_{nj}(\rho_j\vec{m}_n-\rho_n\vec{m}_j) 
\end{pmatrix}.
\end{array}\label{defF} 
\end{equation}
Here we used $\vec{m}_i=(\vec{m_i}^{(1)},\ldots,\vec{m_i}^{(d)} )^\top$ and $\vec{e}_\alpha$ denotes the $\alpha$-th unit vector.
Furthermore, an entropy-entropy flux pair $(\eta,\vec{q}) \in C^2(G)$  for \eqref{genericFormc} on the state space $G$ from  \eqref{eq:statespace} is required. Following \cite{Dafermos:1315649} the tuple
$(\eta,\vec{q})$ is  called an entropy-entropy flux pair to the system \eqref{Euler-MS}
provided $\DD^2\eta(U)$ is positive-definite  and  the compatibility conditions 
\begin{align} \label{eq:compat}
\DD \eta(U) \DD F_\alpha(U) = \DD q_\alpha(U), \quad \alpha = 1,\dots,d,
\end{align}
are satisfied for all $U\in G$. Motivated by the considerations in Section \ref{sec:mod} we suggest for \eqref{Euler-MS} the functions
\begin{equation}\label{eta}
		\eta(U) = \frac{1}{2} \sum_{i=1}^n \frac{|\vec{m}_i|^2}{\rho_i} + \sum_{i=1}^n  h_i(\rho_i), \qquad 
		\vec{q}(U)    =  \frac{1}{2} \sum_{i=1}^n \vec{m}_i\frac{|\vec{m}_i|^2}{\rho_i^2} + \vec{m}_i h_i(\rho_i)'.
\end{equation}
Note that $\eta$ in \eqref{eta} is obviously strictly convex, by Assumption \ref{assumption} (iii).

\begin{thm}[Global classical wellposedness of the IVP for \eqref{Euler-MS}]
	\label{thmregu}
  Let  $s\geq \lfloor d/2 \rfloor + 2$ and let  Assumption \ref{assumption} hold.
   Consider a static equilibrium solution $\hat{U} \in G$ of  \eqref{Euler-MS} of the form 
	\begin{align} \label{eqstate}
	 \hat{U}= (\hat{\rho}_1,...,\hat{\rho}_n, \mathbf{0},...,\mathbf{0})^\top, \quad \hat{\rho}_i > 0, \quad i=1,...,n.
	 \end{align}
	Then there exists a constant $c_1 > 0,$ such that    for all  $U_0 \in H^s(\R^d) $   with 
	 \[ \|U_0-\hat{U}\|_{H^s} \leq c_1   
	 \] 
	 and all $T>0 $ the IVP  \eqref{Euler-MS}, \eqref{Euler-MS_ini}  has a unique solution 
	 $U \in C([0,T),H^s(\R^d))$ taking values in the state space $G$. 
	
Additionally the solution $U$ satisfies the entropy inequality
\begin{equation}  \label{entropyineq}\del_t\eta(U) + \div \vec{q}(U) \leq -\zeta - \sum_{i=1}^n M_i\frac{|\vec{m}_i|^2}{\rho_i},
\end{equation}

with $\zeta=  \sum_{i,j=1}^n \frac{\lambda_{ij}  (\rho_i,\rho_j)}{2\rho_i\rho_j}|\rho_j \vec{m}_i-\rho_i  \vec{m}_j|^2$ (see \eqref{zetamitlambda}).
\end{thm}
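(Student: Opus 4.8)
The plan is to recast \eqref{Euler-MS} in the abstract balance-law form \eqref{genericFormc}--\eqref{defF} and then to verify, one by one, the structural stability conditions of Yong's Theorem \ref{yongtheorem}; once these are checked, global classical wellposedness follows by direct invocation of that theorem, and the entropy inequality \eqref{entropyineq} comes out as a byproduct. There are four things to establish: that $(\eta,\vec q)$ from \eqref{eta} is a genuine entropy--entropy flux pair, that $\hat U$ is an equilibrium whose equilibrium manifold is parametrized by the conserved densities, that the linearized source is dissipative in Yong's sense, and that the Shizuta--Kawashima coupling condition holds. The regularity threshold $s\ge\lfloor d/2\rfloor+2$ is exactly what Yong's energy estimates require (so that $H^s\hookrightarrow C^1$ and Moser-type product estimates close).

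First I would check the entropy structure. Strict convexity of $\eta$ is immediate: the kinetic part $\tfrac12\sum_i|\vec m_i|^2/\rho_i$ is convex in each $(\rho_i,\vec m_i)$ and the internal part $\sum_i h_i(\rho_i)$ is strictly convex by Assumption \ref{assumption}(iii), so $\DD^2\eta$ is positive definite on $G$. The compatibility conditions \eqref{eq:compat} then reduce, because the flux \eqref{defF} is block-diagonal across components, to the single-component identity, which follows from the Gibbs--Duhem relation $h_i+p_i=\rho_i h_i'$; the only care needed is in matching the flux term $\vec m_i h_i'(\rho_i)$ in \eqref{eta}.

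Next I would identify the equilibria and the dissipation. Since $\del\eta/\del\vec m_i=\vec m_i/\rho_i=\vec v_i$ and $S$ has vanishing density block, one computes $\DD\eta(U)\cdot S(U)=\sum_i\vec v_i\cdot\vec s_i=-\sum_i M_i|\vec m_i|^2/\rho_i-\zeta$, with $\zeta$ as in \eqref{zetamitlambda}. By Assumption \ref{assumption}(ii) ($\Lambda\preceq0$) and (iv) ($M_i>0$) this is nonpositive and vanishes precisely when $\vec m=\mathbf 0$; hence $S(\hat U)=0$, the equilibrium manifold is $\{(\vec r,\mathbf 0)\}$, parametrized by the conserved densities, and combining this computation with \eqref{eq:compat} yields \eqref{entropyineq}. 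The heart of the argument is the computation of $\DD S(\hat U)$: every entry of $\vec s$ is proportional to some momentum, so $\del\vec s/\del\vec r=0$ at $\vec m=\mathbf 0$, while a short calculation using $\lambda_{ii}=-\sum_{j\ne i}\lambda_{ij}\rho_j/\rho_i$ shows the momentum block equals $\diag(\rho_i)\Lambda\otimes\mathcal I_d$ from the Maxwell--Stefan part and $-\diag(M_i)\otimes\mathcal I_d$ from friction, i.e.\ altogether $-\mathcal B(\hat{\vec r})=-\mathcal{\tilde B}(\hat{\vec r})\otimes\mathcal I_d$ with $\mathcal{\tilde B}$ as in \eqref{eq:defB}. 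Thus $\DD S(\hat U)=\operatorname{blockdiag}(0,-\mathcal B(\hat{\vec r}))$ is negative semidefinite with kernel exactly the density subspace, which is the tangent space of the equilibrium manifold --- precisely Yong's dissipation condition --- and this uses the positive definiteness of $\mathcal B$ established after \eqref{eq:defB}.

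Finally I would verify Shizuta--Kawashima. A right eigenvector of the symbol $A(\vec\xi)=\sum_\alpha\xi_\alpha\DD F_\alpha(\hat U)$ lying in $\ker\DD S(\hat U)$ must have the form $(\delta\vec r,\mathbf 0)$; but the linearized momentum rows of $A(\vec\xi)(\delta\vec r,\mathbf 0)$ produce $p_i'(\hat\rho_i)\,\vec\xi\,\delta\rho_i$, which the eigenvalue relation forces to vanish, so $\delta\rho_i=0$ for all $i$ since $p_i'>0$ by Assumption \ref{assumption}(iii) and $\vec\xi\ne0$. Hence no nontrivial eigenvector lies in the kernel and the condition holds. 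The main obstacle is the third step, i.e.\ matching the combined friction-plus-Maxwell--Stefan dissipation to Yong's framework: everything hinges on the block $-\mathcal B(\hat{\vec r})$ being negative definite and on friction making $\ker\DD S(\hat U)$ exactly the density directions, which is what renders the coupling condition elementary here. With all hypotheses of Theorem \ref{yongtheorem} verified, the asserted unique solution $U\in C([0,T),H^s(\R^d))$ with values in $G$ exists for $\|U_0-\hat U\|_{H^s}$ small, and \eqref{entropyineq} holds along it.
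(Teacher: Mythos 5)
Your strategy coincides with the paper's: recast \eqref{Euler-MS} in the abstract form \eqref{genericFormc} and check the four hypotheses of Yong's Theorem \ref{yongtheorem}. Your treatment of condition 1 (the momentum block of the linearized source equals $-\tilde{\mathcal{B}}(\hat{\vec{r}})\otimes\mathcal{I}_d$, invertible by positive definiteness of $\mathcal{B}$), of condition 2 (strict convexity of $\eta$ plus the compatibility relations \eqref{eq:compat}, which the paper itself remarks imply the symmetry of $\DD^2\eta\,\DD F_\alpha$), of condition 4 (an eigenvector lying in $\ker\DD S(\hat U)$ must have the form $(\delta\vec{r},\mathbf{0})$, and the momentum rows of the symbol force $p_i'(\hat\rho_i)\vec{\xi}\,\delta\rho_i=0$, hence $\delta\vec{r}=0$), and of the entropy inequality \eqref{entropyineq} all agree with the paper's proof; your Kawashima argument is in fact spelled out more explicitly than the paper's.

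There is, however, a genuine gap at condition 3. As stated in Theorem \ref{yongtheorem}, this is a \emph{nonlinear, uniform} estimate on a compact convex neighborhood $\mathcal{G}$ of $\hat U$: there must exist $c_\mathcal{G}>0$ with $[\DD\eta(U)-\DD\eta(\hat U)]S(U)\le -c_\mathcal{G}|S(U)|^2$ for \emph{all} $U\in\mathcal{G}$. You correctly compute the left-hand side, namely $-[\DD\eta(U)-\DD\eta(\hat U)]\cdot S(U)=\sum_{i=1}^n M_i|\vec{m}_i|^2/\rho_i+\zeta$ (this is \eqref{reform}, using $\DD\eta(\hat U)\cdot S(U)=0$), and you observe it is nonnegative and vanishes only at $\vec{m}=\mathbf{0}$; but you never bound $|S(U)|^2$ by this dissipation. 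Instead you substitute the statement that $\DD S(\hat U)$ is negative semidefinite with kernel equal to the density subspace and call this ``precisely Yong's dissipation condition.'' That is a different and weaker statement: it is a linearized condition at the single point $\hat U$, little more than a restatement of condition 1, whereas condition 3 demands that the entropy dissipation dominate the square of the full nonlinear source uniformly over $\mathcal{G}$. The missing step is the paper's Cauchy--Schwarz estimate
\[ |S(U)|^2 \;\le\; 2\hat c_\mathcal{G}\sum_{i=1}^n M_i\frac{|\vec{m}_i|^2}{\rho_i} \;+\; 2\tilde c_\mathcal{G}\sum_{i,j=1}^n\lambda_{ij}\rho_i\rho_j\left|\frac{\vec{m}_i}{\rho_i}-\frac{\vec{m}_j}{\rho_j}\right|^2, \]
where $\hat c_\mathcal{G}$ and $\tilde c_\mathcal{G}$ are (essentially) suprema over $\mathcal{G}$ of $M_i\rho_i$ and of $n^2\lambda_{ij}\rho_i\rho_j$; this is exactly where the compactness of $\mathcal{G}$ enters, and it yields condition 3 with $c_\mathcal{G}^{-1}=2\max\{\hat c_\mathcal{G},\tilde c_\mathcal{G}\}$. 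Without this estimate (or an equivalent local Taylor-type argument near $\hat U$, which you also do not give), the appeal to Theorem \ref{yongtheorem} is not justified; adding it completes your proof and brings it in line with the paper's.
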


\begin{proof}
For better readability we omit the argument $(\rho_i,\rho_j)$ of the functions $\lambda_{ij}$.
With \eqref{genericFormc}  we meet the setting \eqref{yong2} to apply Theorem  \ref{yongtheorem}. 
We compute  for the equilibrium state $\hat{U}$ from \eqref{eqstate} the  Jacobian
\[ \DD_\vec{m} \vec{s}(\hat{U}) =  \begin{pmatrix}
-M_1- \sum\limits_{j=1, j\neq 1}^{n} \hat{\rho}_j\lambda_{1j}& \hat{\rho}_1\lambda_{12},& \dots& \hat{\rho}_1\lambda_{1n} \\
\hat{\rho}_2\lambda_{21}& \ddots & & \vdots \\ 
\vdots& & \ddots  &\hat{\rho}_{n-1}\lambda_{n-1n-1}\\
\hat{\rho}_n\lambda_{n1}& \dots&\hat{\rho}_n\lambda_{nn-1}& -M_n-\sum\limits_{j=1, j\neq n}^{n}\hat{\rho}_j\lambda_{nj}
\end{pmatrix}\otimes \mathcal{I}_d. \]

From  Assumption \ref{assumption} (i) we have $\hat{\rho}_i\lambda_{ii} = -\sum_{j=1, j\neq i}^{n} \hat{\rho}_j\lambda_{ij}$ which implies
\[ \DD_\vec{m} \vec{s}(\hat{U})= (-\mathcal{M} +  \mathcal{R} \Lambda)\otimes \mathcal{I}_d,\]
with  $\mathcal{M}= \diag(M_i) \in \R^{n\times n}$, $ \mathcal{R}=\diag(\hat{\rho}_i) \in \R^{n\times n}$ being positive-definite. 
The Jacobian $\DD_\vec{m} \vec{s}(\hat{U}) \prec 0$ is in particular  a regular matrix which implies 
condition 1.~in Theorem \ref{yongtheorem}.

For the entropy $\eta$ from \eqref{eta}  and any open set $\hat{\mathcal G} \subset G$ containing $\hat U$  we have $\DD \eta(U) \in \R^{n(d+1)}, \DD^2\eta \in \R^{n(d+1)\times n(d+1)}$, $U\in \hat{\mathcal G}$. The derivatives of $\eta$ read with \eqref{simple2} as
\begin{align*}
\DD \eta(U) &= \left(
\dfrac{\del (\rho_1\psi_1)}{\del\rho_1}(\rho_1) -\dfrac{1}{2}\dfrac{|\vec{m}_1|^2}{\rho_1^2},
\dots, 
\dfrac{\del (\rho_n\psi_n)}{\del\rho_n}(\rho_n) - \dfrac{1}{2} \dfrac{|\vec{m}_n|^2}{\rho_n^2} , 
\dfrac{\vec{m}_1^\top}{\rho_1} ,
\dots,
\dfrac{\vec{m}_n^\top}{\rho_n}
\right)^\top,  \\
\DD \eta(\hat{U}) &= \left(
\dfrac{\del (\rho_1\psi_1)}{\del\rho_1}(\hat{\rho}_1),
\dots,
\dfrac{\del (\rho_n\psi_n)}{\del\rho_n}(\hat{\rho}_n),
\vec{0}^\top,
\dots,
\vec{0}^\top\right)^\top, \\
\DD^2 \eta(U)&= \begin{pmatrix}
\diag\left(\dfrac{1}{\rho_i}p_i'(\rho_i)+\dfrac{|\vec{m}_i|^2}{\rho_i^3}\right) & \blockdiag\left(-\dfrac{\vec{m}_i}{\rho_i^2}\right)^\top\\ 
\blockdiag\left(-\dfrac{\vec{m}_i}{\rho_i^2}\right)& \diag\left(\dfrac{1}{\rho_i}\otimes 1_d\right)
\end{pmatrix}.
\end{align*}
Fro the definition of the operator $\blockdiag$  and the Kronecker product $\otimes$ we refer to the Appendix \ref{sec:appB}.
From the definition of the fluxes $F_\alpha$ in \eqref{defF} we compute 
\begin{align*}  
\DD F_{\alpha}(U) &= 
\left(\begin{smallmatrix}
0_{n\times n} & \mathcal{I}_n \otimes \vec{e}_\alpha^\top \\
\blockdiag\left(-\frac{\vec{m}_i\vec{m}_i^{(\alpha)}}{\rho_i^2}+p_i'(\rho_i)\vec{e}_\alpha\right) & \diag\left(\frac{\vec{m}_i^{(\alpha)}}{\rho_i}1_d\right)+\blockdiag\left(\vec{e_\alpha}\odot \frac{\vec{m}_i}{\rho_i}\right)
\end{smallmatrix}\right),
\end{align*}
where $\vec{e}_\alpha$ denotes the $\alpha$-th unit vector and $1_d := (1,\dots,1)^\top \in \R^d$.
We see that the matrix
\begin{align*}
\DD^2\eta(U)\DD F_{\alpha}(U) &=\left(\begin{smallmatrix}
\diag\left(\dfrac{|\vec{m}_i|^2\vec{m}_i^{(\alpha)}}{\rho_i^4}-\dfrac{p_i^\prime(\rho_i)\vec{m}_i^{(\alpha)}}{\rho_i^2}\right) & \blockdiag\left(\dfrac{p_i^\prime(\rho_i)}{\rho_i}\vec{e}_\alpha-\dfrac{\vec{m}_i\vec{m}_i^{(\alpha)}}{\rho_i^3}\right)^\top \\
\blockdiag\left(\dfrac{p_i^\prime(\rho_i)}{\rho_i}\vec{e}_\alpha-\dfrac{\vec{m}_i\vec{m}_i^{(\alpha)}}{\rho_i^3}\right) & \diag\left(\dfrac{\vec{m}_i^{(\alpha)}}{\rho_i^2}1_d,\dots,\dfrac{\vec{m}_n^{(\alpha)}}{\rho_n^2}1_d\right)
\end{smallmatrix}\right)
\end{align*}
is symmetric. 
Altogether we have verified condition 2.~in Theorem \ref{yongtheorem}, up to now for any open set $\hat{\mathcal G}$ that contains  the equilibrium state 
$\hat U$.\\
To check the third condition \ref{yongtheorem} let $U,\hat{U} \in \hat{\mathcal{G}}$. We use the symmetry of $\Lambda$ and \eqref{zetamitlambda} to obtain
\begin{equation}\label{reform}
\begin{array}{rcl}
-(\DD \eta(U)-\DD \eta(\hat{U}))\cdot S(U) &=& \displaystyle \sum_{i=1}^{n}M_i\frac{|\vec{m}_i|^2}{\rho_i} +  \sum_{j=1}^{n} \lambda_{ij}\left (\frac{\rho_j}{\rho_i} |\vec{m}_i|^2 -\vec{m}_i\cdot\vec{m}_j\right) \\
&=&\displaystyle\sum_{i=1}^{n}M_i\frac{|\vec{m}_i|^2}{\rho_i} + \frac{1}{2} \sum_{i,j=1}^{n} \lambda_{ij} \rho_i\rho_j \left             |\frac{\vec{m}_i}{\rho_i}-\frac{\vec{m}_j}{\rho_j}\right|^2 \\
&= &\displaystyle\sum_{i=1}^{n}M_i\frac{|\vec{m}_i|^2}{\rho_i} + \zeta.
\end{array}
\end{equation}
Furthermore we now choose $\mathcal G$ as a compact, convex subset of $\hat{\mathcal{G}}$ such that we have for all $U\in \mathcal G$ the estimate
\begin{align*} |S(U)|^2 & 
= \sum_{i=1}^{n} \bigg|M_i\vec{m}_i+  \sum_{j=1}^{n} \lambda_{ij} (\rho_j\vec{m}_i-\rho_i\vec{m}_j)\bigg|^2 \\
&
\leq \sum_{i=1}^{n} 2M_i^2|\vec{m}_i|^2+ 2\bigg|\sum_{j=1}^{n} \lambda_{ij} (\rho_j\vec{m}_i-\rho_i\vec{m}_j)\bigg|^2 \\
&
\leq \sum_{i=1}^{n} 2M_i^2|\vec{m}_i|^2+ 2n^2\sum_{i,j=1}^{n}  \lambda_{ij}^2 |\rho_j\vec{m}_i-\rho_i\vec{m}_j|^2 \\
&
\leq 2\hat{c}_\mathcal{G} \sum_{i=1}^{n} M_i\frac{|\vec{m}_i|^2}{\rho_i}+ 2\tilde{c}_\mathcal{G} \sum_{i,j=1}^{n} \lambda_{ij}\rho_i\rho_j \left|\frac{\vec{m}_i}{\rho_i}-\frac{\vec{m}_j}{\rho_j}\right|^2, \\
\intertext{with} 
\hat{c}_\mathcal{G} &=\left(\max\limits_{i=1,...,n} M_i\right) \cdot \left(\max\limits_{i=1,...,n}\max\limits_{\rho_i \in\mathcal{G}}\rho_i\right),\\ 
\tilde{c}_\mathcal{G}&= n^2\left(\max\limits_{i,j=1,...,n}\max\limits_{\rho_i,\rho_j \in \mathcal{G}}\lambda_{ij}(\rho_i,\rho_j)\right) \cdot\left(\max\limits_{i=1,...,n}\max\limits_{\rho_i\in \mathcal{G}}\rho_i\right)^2.
\end{align*} 
Hence, we get from \eqref{reform} with $c_\mathcal{G}^{-1} = 2\max\left\{{\hat{c}_\mathcal{G}},{\tilde{c}_\mathcal{G}}\right\}$ the inequality
\[  -c_\mathcal{G}|S(U)|^2 \geq  (\DD \eta(U)-\DD \eta(\hat{U}))S(U) \quad \text{ in } \mathcal G,
 \]
 which implies the third condition.\\
Finally,
\[ \DD S(U)= \left(\begin{array}{cc}
0_{n\times n} & 0_{n\times nd} \\ & \\[-4mm]
  \mathcal{A}(U)   & \DD_\vec{m}\vec{s}(U)
\end{array}\right),  \]
with  $\mathcal{A}(\hat{U}) = 0_{nd\times n}$. 
We obtain
\begin{align*}
\DD S(\hat{U}) = \left(\begin{array}{cc}
0_{n\times n} & 0_{n\times nd} \\  0_{nd \times n} & \DD_\vec{m} \vec{s}(\hat{U})
\end{array}\right). 
\end{align*}
The lower right block of this matrix is invertible as shown above. Consequently,
\[ \ker(\DD S(\hat{U}))= \Span\{\vec{e}_1,...,\vec{e}_n\} \subset \R^{n(d+1)}. \]

Due to the zero block in $\DD F_{j}(\hat{U})$, the corresponding eigenvectors must have non-zero entries at the $n+1$-th to $n(d+1)$-th position. Therefore, the last condition of Theorem \ref{yongtheorem} holds.

We verified all the conditions of Theorem \ref{yongtheorem}. 
Hence, the system \eqref{Euler-MS} with $U_0$ as initial value has a unique solution $U=U(\vec{x},t) \in C([0,T),H^s(\R^d)),\, s\geq \lfloor d/2\rfloor +2$. The entropy inequality \eqref{entropyineq} is a consequence of \eqref{eq:compat}, \eqref{reform}, and $\DD\eta(\hat{U})S(U)=0$.
\end{proof}

\begin{rem}
\begin{enumerate}
\item Note that due to $s \geq \lfloor d/2 \rfloor +2$ we have with the Sobolev embedding theorem even $U \in C^1(\R^d\times (0, T))$.
\item The symmetry of $\DD^2\eta(U)\DD F_{j}(U)$ follows directly from the compatibility condition \eqref{eq:compat} of the entropy-entropy flux pair and the strict convexity of $\eta$. However, since the matrices are needed in the proof anyway, we checked this property by hand.
\end{enumerate}
\end{rem}

\section{Convergence to  the Parabolic Limit System\label{sec:limit}}
The goal of this section is to prove the convergence of solutions of \eqref{Euler-Darcy-MS-eps}, \eqref{Euler-Darcy-MS-eps_ini}   to solutions of an IVP 
for the parabolic limit system \eqref{Darcy-MS-limit} as $\eps$ tends to zero.
Due to Theorem \ref{thmregu} there exists for each $\eps >0$  a unique global solution
 $ {U}^\eps = (\vec{r}^{\eps\top},\vec{m}^{\eps\top})^T$ to the IVP for the 
$\eps$-scaled system \eqref{Euler-Darcy-MS-eps}. However, the convex set $\mathcal{G}$ might depend on $\eps$ such that the set of admissible initial conditions could shrink to the equilibrium for $\eps \to 0$. 
The techniques of \cite{Peng20161103} allow to show that there exists a time interval independent of $\eps$ with solutions
 $U^\eps
 $ existing. Hence, we assume that there is a time $T>0$ and a compact, convex set $\mathcal{G}$ such that for all $\eps >0 $ 
 the solutions $U^\eps
 $
  exist on the interval $(0,T)$ and are contained in $\mathcal{G}$.

Our convergence proof relies on the relative entropy method which goes
back to  \cite{Dafermos2,Dafermos1} and \cite{DiPerna}. 
This technique only requires one solution to be  a strong (in fact Lipschitz continuous) solution, whereas the other can be a discontinuous  entropy solution. We regard the solutions to \eqref{Euler-Darcy-MS-eps} as weak solutions and the solution of the limit system as strong solution.
Here we rely on the technical framework that 
has been established in \cite{Tzavaras}. We start to prove a dissipation relation (Proposition \ref{entprop}) for so-called relative entropies in 
Section \ref{relativeentropy} and conclude the convergence estimate with the main result in Theorem \ref{thm:paraboliclimit} of
 Section \ref{sec:convergence}.

In the following we omit again the arguments in $\lambda_{ij}$. With a slight misuse of notation  we use the expression 
$(\vec{r}^\eps,\vec{m}^\eps)$ for the solution $U^\eps$.

\subsection{The Relative Entropy Estimate\label{relativeentropy}}
Let us consider \eqref{Euler-Darcy-MS-eps}, \eqref{Euler-Darcy-MS-eps_ini} for $\eps >0$.
To obtain a convergence estimate for the solutions $ (\vec{r}^\eps,\vec{m}^\eps) $ of \eqref{Euler-Darcy-MS-eps}, \eqref{Euler-Darcy-MS-eps_ini}  we  start to fix well prepared functions for 
the initial conditions in \eqref{Euler-Darcy-MS-eps_ini} and \eqref{Darcy-MS-limit_ini} on the entire $\R^d$. 
Let a number $R_0 > 0$ and $\hat{\vec{r}} \in (0,\infty)^n$  be given. We restrict the 
initial datum $(\vec{r}^\eps_0, \vec{m}^\eps_0) \in L^\infty(\R^d)$  in \eqref{Euler-Darcy-MS-eps_ini} to take values in $\mathcal G$ and to satisfy
\begin{align} \label{compactsupp} 
(\vec{r}^\eps_0(\vec{x}),\vec{m}^\eps_0(\vec{x})) &= (\hat{\vec{r}},0)  \text{ for } |\vec{x}|>R_0.
\end{align}
For the initial datum $\vec{\bar{r}}_0 \in C^3(\R^d)$ of the limit equation \eqref{Darcy-MS-limit} we impose the analogous condition 
\begin{align} \label{compsupppm}
\bar{\vec{r}}_0(\vec{x}) &= \hat{\vec{r}}, \ \text{ for } |\vec{x}|>R_0.
\end{align}

Using  the entropy-entropy flux pair $ (\eta, \vec{q})$ from   \eqref{eta}
we define an  \textit{entropy solution} $(\vec{r}^\eps,\vec{m}^\eps) \in L^\infty(\R^d \times (0,T))$ of \eqref{Euler-Darcy-MS-eps},
\eqref{Euler-Darcy-MS-eps_ini}  as a weak solution of \eqref{Euler-Darcy-MS-eps}, \eqref{Euler-Darcy-MS-eps_ini} 
that  takes values in $\mathcal G$  and satisfies
\begin{align}\label{weakentropyineq}
\del_t \eta(\vec{r}^\eps,\vec{m}^\eps)+\frac{1}{\eps}\div \vec{q}(\vec{r}^\eps,\vec{m}^\eps) +\frac{1}{\eps^2} \left(\sum_{i=1}^n M_i\frac{|\vec{m}^\eps_i|^2}{\rho^\eps_i}+ \frac{1}{2} \sum_{i,j=1}^n \lambda_{ij}\rho^\eps_i\rho^\eps_j \left|\frac{\vec{m}^\eps_i}{\rho^\eps_i}-\frac{\vec{m}^\eps_j}{\rho^\eps_j}\right|^2\right) \leq 0
\end{align}
in $D'(\R^d \times [0,T))$. Note that entropy flux scales with $\eps^{-1}$ according to the flux scaling in \eqref{Euler-Darcy-MS-eps}.

Further, let $\vec{\bar{r}} \in C^{3,1}(\R^d \times (0,T))\coloneqq \{\vec{g}\  | \ \vec{g}(\cdot,t) \in C^3(\R^d), t\in (0,T), \ \vec{g}(\vec{x},\cdot) \in C^1((0,T)), \vec{x}\in \R^d\}$  with $\vec{\bar{r}} \in (0,\infty)^n$     be a smooth solution of
\eqref{Darcy-MS-limit} \eqref{Darcy-MS-limit_ini}.  We observe that $\vec{\bar{r}}$ 
satisfies for all $\eps >0$  the expanded but  equivalent   system
\begin{align}\label{defbarsystem}
\begin{split}
\del_t\vec{\bar{r}} + \frac{1}{\eps} \div(\vec{\bar{m}}) &=0, \\
\vec{\bar{m}} &= -\eps (\mathcal{B}(\vec{\bar{r}}))^{-1}  \nabla\vec{p}(\vec{\bar{r}}).
\end{split}
\end{align}
Recall that the matrix $\mathcal B$ has been defined in \eqref{eq:defB}.
The regularity of $\vec{\bar{r}}$  implies $\vec{\bar{m}} \in C^{2,1}(\R^d \times (0,T))$ for the momentum. Note that $\vec{\bar{m}}$ depends on $\eps$
which is suppressed in the notation. 
The equivalent formulation \eqref{defbarsystem} of system \eqref{Darcy-MS-limit} induces for the evaluation of $\vec{\bar{m}}$  at zero time by 
\eqref{compsupppm}
the compatibility condition
\begin{equation}\label{compam}
\vec{\bar{m}}(\vec{x},0) = -\eps \mathcal{B}(\vec{\bar{r}}_0(\vec{x}))^{-1}  \nabla\vec{p}(\vec{\bar{r}}_0(\vec{x})) =0 \text{ for } |\vec{x}|>R_0.
\end{equation}
For this choice of $\vec{\bar{m}}$
we define  now the relative entropy expression
\begin{align}\label{def:relentropy}
\begin{split}
\eta(\vec{r}^\eps,\vec{m}^\eps|\vec{\bar{r}},\vec{\bar{m}})&:= \eta(\vec{r}^\eps,\vec{m}^\eps)-\eta(\vec{\bar{r}},\vec{\bar{m}})-\DD_\vec{r}\eta(\vec{\bar{r}},\vec{\bar{m}})\cdot(\vec{r}^\eps-\vec{\bar{r}}) -\DD_{\vec{m}} \eta(\vec{\bar{r}},\vec{\bar{m}}) \cdot (\vec{m}^\eps-\vec{\bar{m}}) \\
&= \frac{1}{2} \sum_{i=1}^n \rho^\eps_i \left|\frac{\vec{m}^\eps_i}{\rho^\eps_i}- \frac{\bar{\vec{m}}_i}{\bar{\rho}_i} \right|^2+ 
\sum_{i=1}^n  h_i(\rho_i^\eps|\bar{\rho}_i),
\end{split}
\end{align}
with 
\[
h_i(\vec{\rho}^\eps_i|{\bar{\rho}_i}) :=  h_i(\rho^\eps_i)- h_i(\bar{\rho}_i)-   h_i'({\bar{\rho}_i})({\rho}^\eps_i-{\bar{\rho}_i}).
\]

The relative entropy flux is defined by
\begin{align}\label{def:relentropyflux}
\vec{q}(\vec{r}^\eps,\vec{m}^\eps|\vec{\bar{r}},\vec{\bar{m}}) :=&\ \vec{q}(\vec{r}^\eps,\vec{m}^\eps)-\vec{q}(\vec{\bar{r}},\vec{\bar{m}})-(\DD_\vec{r}\eta(\vec{\bar{r}},\vec{\bar{m}})^\top \otimes \mathcal{I}_d)(\vec{m}^\eps-\vec{\bar{m}}) \nonumber \\ 
& \quad - (\mathcal{I}_d \otimes \DD_{\vec{m}_1} \eta(\vec{\bar{r}},\vec{\bar{m}})^\top, \ldots, \mathcal{I}_d \otimes \DD_{\vec{m}_n} \eta(\vec{\bar{r}},\vec{\bar{m}})^\top ) (F(\vec{r}^\eps,\vec{m}^\eps)-F(\vec{\bar{r}},\vec{\bar{m}})) \nonumber  \\
=&  \sum_{i=1}^n \bigg(  \frac{1}{2}\vec{m}^\eps_i \left| \frac{\vec{m}^\eps_i}{\rho^\eps_i}-\frac{\bar{\vec{m}}_i}{\bar{\rho}_i}\right|^2 + \rho_i(h_i^\prime(\rho^\eps_i)-h_i^\prime(\bar{\rho}_i))\left(\frac{\vec{m}^\eps_i}{\rho^\eps_i}-\frac{\bar{\vec{m}}_i}{\bar{\rho}_i}\right) \\
& \hspace*{1cm} + \frac{\bar{\vec{m}}_i}{\bar{\rho}_i}h_i(\rho^\eps_i|\bar{\rho_i}) \bigg),\nonumber
\end{align}
with  $F$ being a vectorial collection of the  momentum  fluxes  given by 
\begin{equation}\label{vectorflux}
F(\vec{r},\vec{m})= \left[\left(\frac{\vec{m}_i \otimes \vec{m}_i}{\rho_i}+p_i(\rho_i)(\vec{e}_1^\top,\vec{e}_2^\top,\dots,\vec{e}_d^\top)^\top\right)\right]_{i=1}^n \in \R^{nd^2}.
\end{equation}
In the last formula we made use of  the notation 
\[ \left[\vec{u}_i\right]_{i=1}^n := (\vec{u}_1^\top,\dots,\vec{u}_n^\top)^\top \in \R^{nm}, \quad \vec{u}_i \in \R^m, i=1,\dots,n, 
\]
which appears frequently in the sequel, where also Lemma \ref{lem:rechenregel} will be used often.\\

After artificially expanding the system \eqref{Darcy-MS-limit} to obtain \eqref{barsystem}, we are able to compare the solutions $(\vec{r}^\eps,\vec{m}^\eps)$ and $(\bar{\vec{r}},\bar{\vec{m}})$ of \eqref{Euler-Darcy-MS-eps} and \eqref{barsystem}, respectively. 
\begin{prop}\label{entprop}
Let Assumption \ref{assumption} hold, let the pressure $p_i$ satisfy $\eqref{passum1}$, and let the initial functions $(\vec{r}^\eps_0, \vec{m}^\eps_0) \in L^\infty(\R^d)$ and $\vec{\bar{r}}_0 \in C^3(\R^d)$   satisfy   \eqref{compactsupp}, \eqref{compsupppm}. 

Consider for  $\eps >0$  an entropy solution  $(\vec{r}^\eps,\vec{m}^\eps) \in L^\infty(\R^d \times (0,T)) $  of  \eqref{Euler-Darcy-MS-eps}, \eqref{Euler-Darcy-MS-eps_ini}  and  a smooth solution $(\bar{\vec{r}},\bar{\vec{m}}) \in  C^{3,1}(\R^d \times [0,T) )    \times  C^{2,1}\R^d \times [0,T) )  $ of 
\eqref{Darcy-MS-limit},  \eqref{Darcy-MS-limit_ini}, supposed to take values in a convex, compact set  $\mathcal G \subset G$.\\  

Then we have the estimate
\begin{equation} \label{entprop_formula}
\begin{array}{rcl}
\lefteqn{\int_0^T\int_{\R^d} \eta(\vec{r}^\eps,\vec{m}^\eps|\vec{\bar{r}},\vec{\bar{m}}) \del_t \psi    +\frac{1}{\eps}  \vec{q}(\vec{r}^\eps,\vec{m}^\eps|\vec{\bar{r}},\vec{\bar{m}}) \cdot \nabla \psi  \dd \vec{x} \dd t}\\[2.5ex]
   &\geq& \displaystyle   -   \int_{\R^d}  \eta(\vec{r}^\eps_0,\vec{m}^\eps_0|\vec{\bar{r}}_0,\vec{\bar{m}}(\cdot, 0)) \psi(\cdot,0)  \dd \vec{x} \\[2.5ex] 
   && \displaystyle   +  \int_0^T\int_{\R^d}  \bigg(  \frac{1}{\eps^2}R_\eps(\vec{r}^\eps,\vec{m}^\eps,\vec{\bar{r}},\vec{\bar{m}}) + Q_\eps +E_\eps\bigg)
  \dd \vec{x} \dd t,  
\end{array}
\end{equation}
with

\begin{align}\label{eq:defQER}
\begin{split}
R_\eps(\vec{r}^\eps,\vec{m}^\eps,\vec{\bar{r}},\vec{\bar{m}}) &= R_{1,\eps}(\vec{r}^\eps,\vec{m}^\eps,\bar{\vec{r}},\bar{\vec{m}}) + R_{2,\eps}(\vec{r}^\eps,\vec{m}^\eps,\bar{\vec{r}},\bar{\vec{m}}),  \\
R_{1,\eps}(\vec{r}^\eps,\vec{m}^\eps,\bar{\vec{r}},\bar{\vec{m}}) &= \sum_{i=1}^n M_i\rho^\eps_i \left|\frac{\vec{m}^\eps_i}{\rho^\eps_i}-\frac{\bar{\vec{m}}_i}{\bar{\rho}_i}\right|^2,  \\
R_{2,\eps}(\vec{r}^\eps,\vec{m}^\eps,\bar{\vec{r}},\bar{\vec{m}}) &= \frac{1}{2}  \sum_{i,j=1}^n \lambda_{ij} \Bigg[ \rho^\eps_i\rho^\eps_j \left|\left(\frac{\vec{m}^\eps_i}{\rho^\eps_i}-\frac{\vec{m}^\eps_j}{\rho^\eps_j}\right)-\left(\frac{\bar{\vec{m}}_i}{\bar{\rho_i}}-\frac{\bar{\vec{m}_j}}{\bar{\rho}_j}\right)\right|^2   \\ 
&\qquad \qquad \qquad + \rho^\eps_i \left(\frac{\bar{\vec{m}}_i}{\bar{\rho}_i}-\frac{\bar{\vec{m}}_j}{\bar{\rho}_j}\right)\cdot\left(\frac{\vec{m}^\eps_i}{\rho^\eps_i}-\frac{\bar{\vec{m}}_i}{\bar{\rho}_i}\right)(\rho^\eps_j-\bar{\rho}_j)  \\
&\qquad \qquad \qquad \left.-\rho^\eps_j\left(\frac{\bar{\vec{m}}_i}{\bar{\rho}_i}-\frac{\bar{\vec{m}}_j}{\bar{\rho}_j}\right)\cdot\left(\frac{\vec{m}^\eps_j}{\rho^\eps_j}-\frac{\bar{\vec{m}}_j}{\bar{\rho}_j}\right)(\rho^\eps_i-\bar{\rho}_i)\right],  \\
Q_\eps(\vec{r}^\eps,\vec{m}^\eps,\vec{\bar{r}},\vec{\bar{m}}) &= \frac{1}{\eps} (\DD^2\eta(\bar{\vec{r}},\bar{\vec{m}}) \otimes \mathcal{I}_d)\nabla\begin{pmatrix}
\vec{\bar{r}}\\\vec{\bar{m}}
\end{pmatrix}\cdot \begin{pmatrix}
\vec{0}\\ F(\vec{r}^\eps,\vec{m}^\eps|\vec{\bar{r}},\vec{\bar{m}})
\end{pmatrix},
 \\
E_\eps(\vec{r}^\eps,\vec{m}^\eps,\vec{\bar{r}},\vec{\bar{m}})&= \bar{\vec{e}}_\eps\cdot\left[\frac{\rho^\eps_i}{\bar{\rho}_i}\left(\frac{\vec{m}^\eps_i}{\rho^\eps_i}-\frac{\bar{\vec{m}}_i}{\bar{\rho}_i}\right)\right]_{i=1}^n,\\
\bar{\vec{e}}_\eps= \bar{\vec{e}}_\eps(\vec{\bar{r}},\vec{\bar{m}}) &= \frac{1}{\eps} \left[\div\left(\frac{\bar{\vec{m}}_i \bar{\vec{m}}_i^\top}{\bar{\rho}_i}\right)\right]_{i=1}^n - \eps \del_t(\mathcal{B}(\vec{\bar{r}})^{-1}\nabla\vec{p}(\vec{\bar{r}})).
\end{split}
\end{align}

\end{prop}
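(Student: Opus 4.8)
The plan is to carry out a relative-entropy computation comparing the weak entropy solution $(\vec{r}^\eps,\vec{m}^\eps)$ of the scaled hyperbolic system \eqref{Euler-Darcy-MS-eps} with the smooth field $(\bar{\vec{r}},\bar{\vec{m}})$ solving the expanded limit system \eqref{defbarsystem}. The decisive structural point is that the two fields obey \emph{different} systems: $(\vec{r}^\eps,\vec{m}^\eps)$ solves a genuine relaxation system, whereas $(\bar{\vec{r}},\bar{\vec{m}})$ satisfies only the mass balance of \eqref{defbarsystem} together with the algebraic Darcy law $\bar{\vec{m}}=-\eps\,\mathcal{B}(\vec{\bar{r}})^{-1}\nabla\vec{p}(\vec{\bar{r}})$ in place of a true momentum balance. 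Tracking this mismatch is exactly what forces the residual term $E_\eps$ to appear, so the first thing I would do is record, by inserting $(\bar{\vec{r}},\bar{\vec{m}})$ into the momentum equations of \eqref{Euler-Darcy-MS-eps}, that the resulting defect equals $\eps\,\bar{\vec{e}}_\eps$ from \eqref{eq:defQER}; here the pressure gradient cancels against $\tfrac{1}{\eps}\tilde{\mathcal{B}}(\vec{\bar{r}})\bar{\vec{m}}$ by the definition \eqref{eq:defB} and the relation $\sum_j\lambda_{ij}\rho_j=0$ (equivalently \eqref{lambdasum}), leaving only the convective and time-derivative pieces.

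Next I would start from the weak entropy inequality \eqref{weakentropyineq} tested against a nonnegative $\psi$, which after distributional integration by parts reads $\int\!\!\int \eta(\vec{r}^\eps,\vec{m}^\eps)\,\del_t\psi+\tfrac{1}{\eps}\vec{q}(\vec{r}^\eps,\vec{m}^\eps)\cdot\nabla\psi\,\dd\vec{x}\,\dd t \ge -\int \eta(\vec{r}^\eps_0,\vec{m}^\eps_0)\psi(\cdot,0)\,\dd\vec{x}+\tfrac{1}{\eps^2}\int\!\!\int D\,\psi\,\dd\vec{x}\,\dd t$, with $D$ the nonnegative physical dissipation collected in \eqref{weakentropyineq}. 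To replace $\eta(\vec{r}^\eps,\vec{m}^\eps)$ by the relative entropy $\eta(\cdot|\cdot)$ of \eqref{def:relentropy} and $\vec{q}$ by the relative flux \eqref{def:relentropyflux}, I subtract the linearization $\eta(\bar{U})+\DD\eta(\bar{U})\cdot(U^\eps-\bar{U})$; since $\bar{U}$ is smooth, every derivative can be shifted onto $\bar{U}$, the linear-in-$U^\eps$ contributions are handled by testing the weak conservation laws for $(\vec{r}^\eps,\vec{m}^\eps)$ against smooth fields built from $\DD\eta(\bar{U})$, and $\del_t\bar{U}$ is eliminated through the two equations \eqref{defbarsystem}.

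The heart of the proof is then to reorganize the remaining integrand into the four groups of \eqref{eq:defQER}. The dissipative terms $R_{1,\eps}$ (friction) and $R_{2,\eps}$ (Maxwell--Stefan) are produced by completing the square: the raw dissipation $D$ is re-expressed relative to the limit velocities $\bar{\vec{m}}_i/\bar{\rho}_i$, using the symmetry of $\Lambda$ and the same identity $\sum_j\lambda_{ij}\rho_j=0$ that underlies the source $-\tilde{\mathcal{B}}(\vec{r})\vec{m}$. The coupling term $Q_\eps$ is the second-order remainder of the flux difference, namely $\tfrac{1}{\eps}(\DD^2\eta(\bar{U})\otimes\mathcal{I}_d)\nabla\bar{U}$ paired with the momentum part of the relative flux $F(\vec{r}^\eps,\vec{m}^\eps|\vec{\bar{r}},\vec{\bar{m}})$, and $E_\eps$ is the defect $\bar{\vec{e}}_\eps$ from the first step contracted against the relative velocity $\big[\tfrac{\rho^\eps_i}{\bar{\rho}_i}(\vec{m}^\eps_i/\rho^\eps_i-\bar{\vec{m}}_i/\bar{\rho}_i)\big]_{i=1}^n$. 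Assembling these, and using the compatibility relations \eqref{eq:compat} to convert the remaining flux contributions into entropy-flux terms, yields \eqref{entprop_formula}.

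I expect the main obstacle to be the bookkeeping of the second step while keeping the weak formulation rigorous. Because $(\vec{r}^\eps,\vec{m}^\eps)$ is merely $L^\infty$ with values in the compact set $\mathcal{G}$, all of its derivatives must be interpreted distributionally, so the ``multiply by $\DD\eta(\bar{U})$'' manipulations have to be realized by testing the balance laws against smooth combinations of $\bar{U}$ and $\psi$, and the completing-the-square step producing the precise quadratic form of $R_{2,\eps}$ must be carried out at the level of the integrand. Matching the factor of $\tfrac{1}{2}$ and the sign conventions in the Maxwell--Stefan quadratic form, while staying consistent with the $\eps^{-1}$ scaling of the flux and the $\eps^{-2}$ scaling of the dissipation, is where errors are most likely to creep in.
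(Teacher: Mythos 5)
Your proposal follows essentially the same route as the paper's proof: you derive the expanded system \eqref{barsystem} with the momentum defect $\bar{\vec{e}}_\eps$ (your $\eps\,\bar{\vec{e}}_\eps$ in the original scaling of \eqref{Euler-Darcy-MS-eps} is the same object), combine the weak entropy inequality \eqref{weakentropyineq} with the entropy identity for $(\bar{\vec{r}},\bar{\vec{m}})$, test the difference of the weak formulations against $\psi\,\DD_\vec{r}\eta(\bar{\vec{r}},\bar{\vec{m}})$ and $\psi\,\DD_\vec{m}\eta(\bar{\vec{r}},\bar{\vec{m}})$, and reorganize the resulting terms into $R_{1,\eps}$, $R_{2,\eps}$, $Q_\eps$, $E_\eps$ exactly as in \eqref{eq:defQER}. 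The structural identifications (Darcy law cancelling the pressure gradient via $\sum_j\lambda_{ij}\rho_j=0$, the dissipation recentred at the limit velocities, $Q_\eps$ as the quadratic flux remainder, $E_\eps$ as the defect contracted with the relative velocity) all match the paper's argument.
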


Before we present the proof of Proposition \ref{entprop} we summarize two remarks on the scaling of the 
remainder terms  $Q_\eps$ and $E_\eps$ with respect to $\eps$ which will be needed in Section \ref{sec:convergence}.
\begin{rem}\ \label{remark}
\begin{enumerate}
\item The first factor of $Q_\eps$ depends only on $(\bar{\vec{r}}, \bar{\vec{m}})$. Although $\bar{\vec{m}}$ involves $\eps$  the factor
 is independent  of $\eps$, i.~e., 
\begin{align*}
\frac{1}{\eps}\left( (\DD^2_{\vec{r},\vec{m}}\eta(\bar{\vec{r}},\bar{\vec{m}})\otimes \mathcal{I}_d)\nabla\vec{\bar{r}}+(\DD^2_{\vec{m},\vec{m}}\eta(\bar{\vec{r}},\bar{\vec{m}})\otimes \mathcal{I}_d)\nabla\vec{\bar{m}}\right) &= \frac{1}{\eps}\left[\nabla\left(\frac{\bar{\vec{m}}_i}{\bar{\rho}_i}\right)\right]_{i=1}^n \\
&= - \nabla\left(\diag\left(\frac{1}{\bar{\rho}_i \otimes 1_d}\right)\mathcal{B}(\vec{\bar{r}})^{-1} \nabla\vec{p}(\vec{\bar{r}})\right)\\ &= O(1).
\end{align*}
\item 
Recalling the definition of $\tilde{B}$ from \eqref{eq:defB} the smoothness of $\bar{\vec{r}}$ and $\vec{p}$ implies 
\begin{align}\label{eq:eOeps}
\begin{split}
\bar{\vec{e}}_\eps&= \frac{1}{\eps} \left[\div\left(\frac{\bar{\vec{m}}_i \bar{\vec{m}}_i^\top}{\bar{\rho}_i}\right)\right]_{i=1}^n  - \eps\del_t(\mathcal{B}(\vec{\bar{r}})^{-1}\nabla\vec{p}(\bar{\vec{r}}))  \\ 
&= \frac{1}{\eps}\left[\div\left(\diag\left(\frac{1}{\bar{\rho}_i \otimes 1_d}\right)\left[\bar{\vec{m}_i} \bar{\vec{m}}_i^\top\right]_{i=1}^n\right)\right] - \eps\del_t(\mathcal{B}(\vec{\bar{r}})^{-1}\nabla\vec{p}(\bar{\vec{r}}))  \\
&= \eps\left[\div\left(\diag\left(\frac{1}{\bar{\rho}_i \otimes 1_d}\right)\left[\mathcal{\tilde{B}}(\vec{\bar{r}})^{-1}\nabla p_i(\bar{\rho}_i)  \left(\mathcal{\tilde{B}}(\vec{\bar{r}})^{-1}\nabla p_i(\bar{\rho}_i)\right)^\top\right]_{i=1}^n\right)\right]  \\
&\quad  - \eps\del_t(\mathcal{B}(\vec{\bar{r}})^{-1}\nabla\vec{p}(\bar{\vec{r}}))  \\
&= O(\eps).
\end{split}
\end{align}
 Hence the vector $\bar{\vec{e}}_\eps=O(\eps)$ in $E_\eps$ is of order $O(\eps)$.
\end{enumerate}
\end{rem}

\textit{Proof (of Proposition \ref{entprop}).} To simplify notations we  may  omit  the index $\eps$ and write 
$  (\vec{r},\vec{m}) = (\vec{r}^\eps,\vec{m}^\eps)$.
The  entropy solution  $(\vec{r},\vec{m})$ of the IVP for \eqref{Euler-Darcy-MS-eps} satisfies the inequality \eqref{weakentropyineq}.

In order to derive a similar expression for  the solution $\vec{\bar{r}}$ of the IVP for \eqref{Darcy-MS-limit} 
we rewrite  the equivalent system \eqref{defbarsystem}  for the pairing $(\vec{\bar{r}},\vec{\bar{m}})$ further.  

With $\lambda_{ii}r_i = -\sum_{i\neq j} \lambda_{ij}r_j$,  \eqref{lambda}, and \eqref{lambdasum},   it is easy to see that the solution $(\vec{\bar{r}},\vec{\bar{m}})$ of \eqref{defbarsystem} also satisfies 
\begin{align}\label{barsystem}
\begin{split}
\del_t\vec{\bar{r}} + \frac{1}{\eps} \div(\vec{\bar{m}}) &= 0, \\
\del_t\vec{\bar{m}} + \frac{1}{\eps} \div(F(\vec{\bar{r}},\vec{\bar{m}})) &= \bigg[-\frac{1}{\eps^2} M_i\bar{\vec{m}}_i-\frac{1}{\eps^2}\sum_{j=1}^n \lambda_{ij}(\bar{\rho}_j\bar{\vec{m}}_i-\bar{\rho}_i\bar{\vec{m}}_j)\bigg]_{i=1}^n+\bar{\vec{e}}_\eps (\vec{\bar{r}},\vec{\bar{m}}),
\end{split}
\end{align}
with $\bar{\vec{e}}_\eps$ from \eqref{eq:defQER} and $F$ from \eqref{vectorflux}.\\
With \eqref{sec:convergence} we see that
$(\vec{\bar{r}},\vec{\bar{m}})$ satisfies in the sense of distributions 
\begin{align}\label{2.19}
\begin{split}
\del_t \eta(\vec{\bar{r}},\vec{\bar{m}}) +\frac{1}{\eps} \div \vec{q}(\vec{\bar{r}},\vec{\bar{m}}) 
=& -\frac{1}{\eps^2}\left(\sum_{i=1}^n M_i \frac{|\bar{\vec{m}}_i|^2}{\bar{\rho}_i}+ \frac{1}{2}\sum_{i,j=1}^n \lambda_{ij}\bar{\rho}_i\bar{\rho}_j\left|\frac{\bar{\vec{m}}_i}{\bar{\rho}_i}-\frac{\bar{\vec{m}}_j}{\bar{\rho}_j}\right|^2\right)  \\
&+ \DD_{\vec{m}}\eta(\vec{\bar{r}},\vec{\bar{m}})\cdot \bar{\vec{e}}_\eps.
\end{split}
\end{align}
Before we use the entropy relations \eqref{weakentropyineq} and \eqref{2.19} we return to the weak formulations:
We subtract  the weak formulations  of   \eqref{barsystem}  from  the weak formulation for 
\eqref{Euler-Darcy-MS-eps} and obtain 
for the mass balance equations 
 \begin{align}\label{2.31}
 \raisetag{-5ex}&\hspace*{5ex}-\int_0^T  \int_{\R^d} \del_t\boldsymbol{\phi}\cdot(\vec{r}-\bar{\vec{r}}) 
 + \frac{1}{\eps} \nabla\boldsymbol{\phi}\cdot(\vec{m}-\bar{\vec{m}}) \dd \vec{x} \dd \tau - \int_{\R^d} \boldsymbol{\phi}(\vec{x},0)\cdot(\vec{r}_0-\bar{\vec{r}}_0)  \dd \vec{x} =0.
 \end{align}
 Using the definition  of  $\vec{\bar e}_\eps$     in \eqref{eq:defQER} yields for the momentum components 
 \begin{equation}
 \begin{array}{rcl}
 \lefteqn{-\int_0^T  \int_{\R^d}  \del_t\boldsymbol{\theta} \cdot (\vec{m}-\bar{\vec{m}})+\frac{1}{\eps}\nabla\boldsymbol{\theta} \cdot (F(\vec{r},\vec{m})-F(\bar{\vec{r}},\bar{\vec{m}})) \dd \vec{x}\dd\tau}\\[2.5ex]
 \lefteqn{-\int_{\R^d}  \boldsymbol{\theta}(\vec{x},0) \cdot (\vec{m}_0-\bar{\vec{m}} (\cdot,0)) \dd \vec{x}}\\
 &=& \displaystyle   \int_0^T  \int_{\R^d}  \boldsymbol{\theta} \cdot \Bigg(-\frac{1}{\eps^2}\bigg[M_i(\vec{m}_i-\bar{\vec{m}}_i) \bigg]_{i=1}^n \\[2.5ex]
 &&  \displaystyle {} \hspace*{1cm} +\bigg[ \sum_{j=1}^n \lambda_{ij} (\rho_j\vec{m}_i-\rho_i\vec{m}_j + \bar{\rho}_j \bar{\vec{m}}_i - \bar{\rho}_i \bar{\vec{m}}_j) \bigg]_{i=1}^n -\bar{\vec{e}}_\eps\Bigg) \dd \vec{x} \dd\tau. 
 \end{array}
 \label{2.32}
 \end{equation}
 Here, $\boldsymbol{\phi}$ and $\boldsymbol{\theta}$ are vector-valued test functions with compact support in  
 $\R^d \times [0,T)$.
 We make with some function $\psi \in C^\infty_0(\R^d \times [0,T))$ the specific choices
 \begin{align*}
 \boldsymbol{\phi}(\vec{x},\tau) &=  \psi (\vec{x},\tau)\DD_\vec{\vec{r}} {\eta}(\bar{\vec{r}}(\vec{x},\tau),\bar{\vec{m}}(\vec{x},\tau)),\\[1.2ex] \boldsymbol{\theta}(\vec{x},\tau)& =\psi (\vec{x},\tau)\DD_\vec{m} {\eta}(\bar{\vec{r}}(\vec{x},\tau),\bar{\vec{m}}(\vec{x},\tau)), 
 \end{align*}
 which lead  in \eqref{2.31} and \eqref{2.32} to 
 \begin{equation}
 \begin{array}{rcl} 
 \lefteqn{ \int_0^T\int_{\R^d} \left( \DD_\vec{r} {\eta}\left(\bar{\vec{r}},\bar{\vec{m}}\right)\cdot \left(\vec{r}-\bar{\vec{r}}\right)  + \DD_\vec{m} {\eta}\left(\bar{\vec{r}},\bar{\vec{m}}\right)\cdot \left(\vec{m}-\bar{\vec{m}}\right)\right)  \psi_t    \dd \vec{x}  \dd t}\\[2.5ex]
 \lefteqn{\displaystyle  + \int_0^T\int_{\R^d}  \frac{1}{\eps^2}  \Big( (\DD_\vec{r} {\eta}\left(\bar{\vec{r}},\bar{\vec{m}}\right) \otimes \nabla \psi) \cdot \left(\vec{m}-\bar{\vec{m}}\right)  + (\DD_\vec{m} {\eta}\left(\bar{\vec{r}},\bar{\vec{m}}\right)\otimes \nabla \psi) \cdot  (F(\vec{r},\vec{m})-F(\bar{\vec{r}},\bar{\vec{m}}))  
 	\Big) \dd \vec{x}  \dd t} \\ [2.5ex]     
& \hspace*{0.9cm}=& \displaystyle-   \int_{\R^d} \bigg( \DD_\vec{r} {\eta}\left(\bar{\vec{r}}_0,\bar{\vec{m}}_0\right)\cdot \left(\vec{r}_0-\bar{\vec{r}}_0\right) \\
&& {} \hspace*{2cm}\displaystyle + \DD_\vec{m} {\eta}\left(\bar{\vec{r}}_0,\bar{\vec{m}}(\cdot,0)\right)\cdot \left(\vec{m}_0-\bar{\vec{m}}(\cdot,0)\right)\bigg)  \psi(\cdot,0)  \dd \vec{x}\\[2.5ex] 
&& {}\displaystyle  -\int_0^T \int_{\R^d} J_\eps  \psi  \dd \vec{x}\dd t.
\end{array} \label{eq:comb3}     
 \end{equation}
 The term  $J_\eps$  in \eqref{eq:comb3}   is defined as

\begin{flalign}  
J_\eps := 
 &{} \DD_\vec{m} \eta(\vec{\bar{r}},\vec{\bar{m}}) \nonumber \\
& \cdot \left( -\frac{1}{\eps^2}\left[ M_i(\vec{m}_i-\bar{\vec{m}}_i)+\sum_{j=1}^n \lambda_{ij}(\rho_j \vec{m}_i-\rho_i \vec{m}_j-\bar{\rho}_j \bar{\vec{m}}_i+\bar{\rho}_i \bar{\vec{m}}_j)\right]_{i=1}^n -\bar{\vec{e}}_\eps\right)  \nonumber \\
&+  {\del_t\big[\DD_\vec{r}\eta(\vec{\bar{r}},\vec{\bar{m}}) \big]}\cdot(\vec{r}-\vec{\bar{r}})+ {\del_t\big[\DD_\vec{m}\eta(\vec{\bar{r}},\vec{\bar{m}})\big]}\cdot (\vec{m}-\vec{\bar{m}}) \nonumber \\
&+\frac{1}{\eps} \nabla (\DD_\vec{r}\eta(\vec{r},\vec{m}))\cdot(\vec{m}-\vec{\bar{m}})+\frac{1}{\eps}\nabla(\DD_\vec{m}\eta(\vec{\bar{r}},\vec{\bar{m}})) \cdot (F(\vec{r},\vec{m})-F(\vec{\bar{r}},\vec{\bar{m}})). \nonumber 
\end{flalign}
Combining the  entropy inequality \eqref{weakentropyineq} for $(\vec{r}^\eps, \vec{m}^\eps)$, the entropy equation \eqref{2.19}  for 
$(\bar{\vec{r}}, \vec{\vec{m}})$ and the relation \eqref{eq:comb3} 
the definition of the relative entropy-entropy flux pair in \eqref{def:relentropy},
\eqref{def:relentropyflux}  implies that the inequality
\begin{align*}
& \hspace*{-0.5cm}\del_t\eta(\vec{r},\vec{m}|\vec{\bar{r}},\vec{\bar{m}}) +\frac{1}{\eps} \div \vec{q}(\vec{r},\vec{m}| \vec{\bar{r}},\vec{\bar{m}}) \\[2ex]
\leq& -\frac{1}{\eps^2} \left( \DD_\vec{m}\eta(\vec{r},\vec{m})\cdot \bigg[M_i\vec{m}_i+\sum_{j=1}^n \lambda_{ij}(\rho_j\vec{m}_i-\rho_i\vec{m}_j)\bigg]_{i=1}^n\right) \\[2ex]
& +\frac{1}{\eps^2} \left( \DD_\vec{m} \eta(\vec{\bar{r}},\vec{\bar{m}}) \cdot \bigg[ M_i\bar{\vec{m}}_i + 
\sum_{j=1}^n \lambda_{ij}(\bar{\rho}_j\bar{\vec{m}}_i-\bar{\rho}_i\bar{\vec{m}}_j)\bigg]_{i=1}^n \right) \\[2ex]
&- \DD_{\vec{m}}\eta(\vec{\bar{r}},\vec{\bar{m}})\cdot \bar{\vec{e}}_\eps - \epsilon\eps 
\end{align*}
holds in the weak sense.\\
In the term  $J_\eps$   we use \eqref{2.19} and Lemma \ref{lem:rechenregel} to compute the time derivative of $ \nabla_{\vec{m}} \eta (\vec{\bar{r}},\vec{\bar{m}})$   by  the chain rule  which leads to 
\begin{flalign}
J_\eps= &-\frac{1}{\eps^2} \DD_\vec{m} \eta(\vec{\bar{r}},\vec{\bar{m}}) \cdot \left[ M_i(\vec{m}_i-\bar{\vec{m}}_i)+\sum_{j=1}^n \lambda_{ij}(\rho_j \vec{m}_i-\rho_i \vec{m}_j-\bar{\rho}_j \bar{\vec{m}}_i+\bar{\rho}_i \bar{\vec{m}}_j)\right]_{i=1}^n \mkern-20mu  \nonumber  \\[2ex]
  &  -\DD_\vec{m} \eta(\vec{\bar{r}},\vec{\bar{m}}) \cdot \bar{\vec{e}}_\eps \nonumber &\\[2ex]
&+\DD^2 \eta(\vec{\bar{r}},\vec{\bar{m}}) \del_t\begin{pmatrix}
\vec{\bar{r}}\\ \vec{\bar{m}}
\end{pmatrix} \cdot \begin{pmatrix}
\vec{r}-\vec{\bar{r}}\\ \vec{m}-\vec{\bar{m}}
\end{pmatrix} \nonumber    \\[2ex]
&+ \frac{1}{\eps} (\DD^2 \eta(\vec{\bar{r}},\vec{\bar{m}}) \otimes \mathcal{I}_d) \nabla\begin{pmatrix}
\vec{\bar{r}}\\ \vec{\bar{m}}
\end{pmatrix} \cdot \begin{pmatrix}
\vec{m}-\vec{\bar{m}} \\ F(\vec{r},\vec{m})-F(\vec{\bar{r}},\vec{\bar{m}})
\end{pmatrix} \nonumber &\\[2ex]
= &-\frac{1}{\eps^2}\DD_\vec{m} \eta(\vec{\bar{r}},\vec{\bar{m}})\cdot \left[M_i(\vec{m}_i-\bar{\vec{m}}_i)+\sum_{j=1}^n \lambda_{ij} (\rho_j\vec{m}_i-\rho_i\vec{m}_j-\bar{\rho}_j\bar{\vec{m}}_i+\bar{\rho}_i\bar{\vec{m}}_j) \right]_{i=1}^n \mkern-20mu\nonumber \\[2ex]
& - \DD_\vec{m} \eta(\vec{\bar{r}},\vec{\bar{m}}) \cdot \bar{\vec{e}}_\eps \nonumber  \\[2ex]
& + \DD^2 \eta(\vec{\bar{r}},\vec{\bar{m}}) \begin{pmatrix}
\vec{0} \\ \displaystyle  \left[-\frac{1}{\eps^2}M_i\bar{\vec{m}}_i - \frac{1}{\eps^2}\sum_{j=1}^n \lambda_{ij} (\bar{\rho}_j\bar{\vec{m}}_i-\bar{\rho_i}\bar{\vec{m}}_j)\right]_{i=1}^n +\bar{\vec{e}}_\eps
\end{pmatrix}\cdot \begin{pmatrix}
\vec{r}-\vec{\bar{r}}\\ \vec{m}-\vec{\bar{m}}
\end{pmatrix} \nonumber &\\[2ex]
&+\frac{1}{\eps}(\DD^2 \eta(\vec{\bar{r}},\vec{\bar{m}})\otimes \mathcal{I}_d) 
\nabla \begin{pmatrix}
\vec{\bar{r}}\\\vec{\bar{m}}
\end{pmatrix} \cdot \begin{pmatrix}
\vec{0}\\ F(\vec{r},\vec{m}|\vec{\bar{r}},\vec{\bar{m}})
\end{pmatrix}. & \nonumber
\end{flalign}
Finally we proceed with this  expression for $J_\epsilon$ and  deduce
\begin{align*}
	&\hspace*{-0.5cm}\del_t\eta(\vec{r},\vec{m}|\vec{\bar{r}},\vec{\bar{m}}) +\frac{1}{\eps} \div \vec{q}(\vec{r},\vec{m}| \vec{\bar{r}},\vec{\bar{m}}) \\
= &-\frac{1}{\eps^2}\left( \DD_\vec{m} \eta(\vec{r},\vec{m}) \cdot \bigg[ M_i \vec{m}_i+ \sum_{j=1}^n \lambda_{ij} (\rho_j \vec{m}_i - \rho_i \vec{m}_j)\bigg]_{i=1}^n \right) 
\\ &+ \frac{1}{\eps^2} \left( \DD_{\vec{m}} \eta(\vec{\bar{r}},\vec{\bar{m}}) \cdot \bigg[ M_i\bar{\vec{m}}_i + \sum_{j=1}^n \lambda_{ij} (\bar{\rho}_j\bar{\vec{m}}_i-\bar{\rho}_i\bar{\vec{m}}_j) \bigg]_{i=1}^n\right) \\
&+\frac{1}{\eps^2} \DD_\vec{m} \eta(\vec{\bar{r}},\vec{\bar{m}}) \cdot \bigg[ M_i(\vec{m}_i-\bar{\vec{m}}_i) +
\sum_{j=1}^n \lambda_{ij}(\rho_j \vec{m}_i - \rho_i\vec{m}_j + \bar{\rho}_i \bar{\vec{m}}_j - \bar{\rho}_j \bar{\vec{m}}_i)\bigg]_{i=1}^n \\
& -\DD^2 \eta(\vec{\bar{r}},\vec{\bar{m}}) \begin{pmatrix}
\vec{0} \\ -\frac{1}{\eps^2}\left[ M_i\bar{\vec{m}}_i +  \sum_{j=1}^n \lambda_{ij} (\bar{\rho}_j\bar{\vec{m}}_i-\bar{\rho}_i\bar{\vec{m}}_j) \right]_{i=1}^n
\end{pmatrix}\cdot \begin{pmatrix}
\vec{r}-\vec{\bar{r}} \\ \vec{m}-\vec{\bar{m}}
\end{pmatrix} \\ 
&- \DD^2\eta(\vec{\bar{r}},\vec{\bar{m}})\begin{pmatrix}
\vec{0} \\ \bar{\vec{e}} 
\end{pmatrix} \cdot \begin{pmatrix}
\vec{r}-\vec{\bar{r}} \\ \vec{m}-\vec{\bar{m}}
\end{pmatrix} - \frac{1}{\eps} (\DD^2\eta(\vec{\bar{r}},\vec{\bar{m}})\otimes \mathcal{I}_d)\nabla\begin{pmatrix}
\vec{\bar{r}}\\ \vec{\bar{m}}
\end{pmatrix} \cdot \begin{pmatrix}
\vec{0} \\ F(\vec{r},\vec{m}|\vec{\bar{r}},\vec{\bar{m}})
\end{pmatrix} \\
=& -\frac{1}{\eps^2} R_\eps - Q_\eps - E_\eps.
\end{align*}
The last line follows from the definitions in \eqref{eq:defQER} and concludes the proof.
\hfill\qedhere

\subsection{The Convergence Estimate\label{sec:convergence}}
In this section we make an additional assumption on the pressure. Let there exist constants $a_i>0,\ i=1,\dots,n$, such that
\begin{align}
\label{passum1}
p_i^{\prime\prime}(r) &\leq a_i\dfrac{p_i^\prime(r)}{r} \quad \text{ for all } r > 0,  \text{ and } i=1,...,n.
\end{align}
The condition \eqref{passum1} is satisfied for e.g.~the  isentropic pressure laws 
 $p_i(r)=k_i r^{\gamma_i}$ ($\gamma_i \geq 1,\ k_i > 0$) with any choice of $a_i >0$.\\ 
Due to \eqref{passum1} we have 
\begin{align}\label{eq:passum2}
\frac{1}{a_i}p_i''(r) \leq h_i''(r) = \frac{p_i'(r)}{r}.
\end{align}
Note that $\ds p_i(\rho_i|\bar{\rho}_i) = p_i(\rho_i)-p_i(\bar{\rho}_i)-p_i'(\bar{\rho}_i)(\rho_i-\bar{\rho}_i) = (\rho_i-\bar{\rho}_i)^2 \int_0^1 \int_0^\tau p''(s\rho_i + (1-s) \bar{\rho}_i) \ddd s \ddd \tau.$  

Hence with $$|F(\vec{r}^\eps,\vec{m}^\eps|\vec{\bar{r}},\vec{\bar{m}})| = \eta(\vec{r}^\eps,\vec{m}^\eps|\vec{\bar{r}},\vec{\bar{m}}) + \sum_{i=1}^n p_i(\rho_i|\bar{\rho}_i)-h_i(\rho_i|\bar{\rho}_i)$$ 
the inequality \eqref{eq:passum2} implies with some $c>0$
\begin{align}
\label{eq:passumfol}
|F(\vec{r}^\eps,\vec{m}^\eps|\vec{\bar{r}},\vec{\bar{m}})| \leq c \eta(\vec{r}^\eps,\vec{m}^\eps|\vec{\bar{r}},\vec{\bar{m}}),
\end{align}
with $F(\vec{r},\vec{m}|\vec{\bar{r}},\vec{\bar{m}}) = F(\vec{r},\vec{m})-F(\vec{\bar{r}},\vec{\bar{m}})- \DD F(\vec{\bar{r}},\vec{\bar{m}}) (\vec{r}-\vec{\bar{r}},\vec{m}-\vec{\bar{m}}).$

We need to introduce a slightly different entropy-entropy flux pair to get a convergence estimate that corresponds to convergence in standard
Lebesgue spaces.  Subtracting the constant $\eta(\vec{\hat{r}}, \vec{0})= \sum_{i=1}^n h_i(\hat{\rho_i})$  from the entropy $\eta$ we obtain a modified entropy-entropy flux  pair 
$(\tilde{\eta}, \tilde{\vec{q}})$
with the property  $\tilde{\eta}(\hat{\vec{r}},0)=0$ by setting
\begin{align*}
\tilde{\eta}(\vec{r},\vec{m}) &= \eta(\vec{r},\vec{m}) -  \eta(\vec{\hat{r}}, \vec{0}) , \quad \tilde{\vec{q}}(\vec{r},\vec{m}) = \vec{q}(\vec{r},\vec{m}).
\end{align*}

Since \eqref{Euler-Darcy-MS-eps} is a hyperbolic balance law, 
due to \eqref{compactsupp} and the uniform bound in $\mathcal G$  the functions $(\vec{r}^\eps -\hat{\vec{r}},\vec{m}^\eps)$ 
have uniform compact support. 
Then again the uniform  boundedness  implies  that there 
  are constants $K_1,K_2 >0$  such that for any $\eps>0$ the entropy solution  $(\vec{r}^\eps,\vec{m}^\eps)$ of \eqref{Euler-Darcy-MS-eps}, \eqref{Euler-Darcy-MS-eps_ini} 
  satisfies
\begin{align}\label{eq:uniformbound}
\begin{split}
\max_{i=1,\dots,n}\sup_{t\in [0,T]} \left\{\int_{\R^d}|\rho_i^\eps(\vec{x},t)-\hat{\rho_i}(\vec{x},t)|\dd \vec{x}  \right\} \leq K_1, \\
\sup_{t\in [0,T]} \left\{\int_{\R^d} \tilde{\eta}(\vec{r}^\eps(\vec{x},t),\vec{m}^\eps(\vec{x},t)) \dd \vec{x}\right\} \leq K_2.
\end{split}
\end{align}
As discussed in the introduction to Section \ref{sec:limit} we will consider a classical  solution $\bar{\vec{r}}$  of   \eqref{Darcy-MS-limit}. 
Let $(\bar{\vec{r}} , \bar{\vec{m}}) \in C^{3,1}(\R^d \times [0,T) )    \times  C^{2,1}\R^d \times [0,T) )  $ be a  classical  solution of \eqref{Darcy-MS-limit}   (respectively   the equivalent system \eqref{defbarsystem}),  \eqref{Darcy-MS-limit_ini} with initial data satisfying \eqref{compsupppm}.  Since \eqref{Darcy-MS-limit} is a  regular parabolic system
 we can assume under corresponding conditions on $\bar{\vec{r}}_0$,that $(\bar{\vec{r}} , \bar{\vec{m}})$ is also contained in  
  $\mathcal G$. 
With the relative entropy $\tilde{\eta}(\vec{r}^\eps,\vec{m}^\eps|\bar{\vec{r}},\bar{\vec{m}}) = {\eta}(\vec{r}^\eps,\vec{m}^\eps|\bar{\vec{r}},\bar{\vec{m}}) -\sum_{i=1}^n h_i(\hat{\rho_i})  $,
we measure the distance between the solutions $(\vec{r}^\eps,\vec{m}^\eps)$  and $(\bar{\vec{r}} , \bar{\vec{m}}) $ via the expression
\begin{align}\label{yardstick} 
\varphi_\eps(t) := \int_{\R^d} \tilde{\eta}(\vec{r}^\eps(\vec{x},t),\vec{m}^\eps(\vec{x},t)|\bar{\vec{r}}(\vec{x},t) ,\bar{\vec{m}}(\vec{x},t) ) \dd \vec{x}.
\end{align}
Note that the conditions  \eqref{compactsupp}, \eqref{compsupppm}, and \eqref{compam} show that $\varphi_\eps(0)$ is finite. 
Due to the strict convexity of $\tilde{\eta}$  there are  some constants $c,C>0$ (which depend on $\mathcal{G}$) such that 
\begin{equation} \label{comparel}
c  |(\vec{s},\vec{n})-(\bar{\vec{s}},\bar{\vec{n}})|^2\leq \tilde{\eta}(\vec{s},\vec{n}|\bar{\vec{s}},\bar{\vec{n}}) \leq C |(\vec{s},\vec{n})-(\bar{\vec{s}},\bar{\vec{n}})|^2,\end{equation}
holds for all vectors  $ (\vec{s},\vec{n}), (\bar{\vec{s}},\bar{\vec{n}})  \in \mathcal{G}$. As a consequence 
 \eqref{yardstick} is compatible with  the $L^2$-difference of the solutions  $(\vec{r}^\eps,\vec{m}^\eps)$  and $(\bar{\vec{r}} , \bar{\vec{m}}) $.
We can now state the final theorem.

\begin{thm}[Asymptotic behavior for \eqref{Euler-Darcy-MS-eps}]\label{thm:paraboliclimit}
Let Assumption \ref{assumption} hold, let the pressure $p_i$ satisfy $\eqref{passum1}$, and let the initial functions $(\vec{r}^\eps_0, \vec{m}^\eps_0) \in L^\infty(\R^d)$ and $\vec{\bar{r}}_0 \in C^3(\R^d)$   satisfy   \eqref{compactsupp}, \eqref{compsupppm}. 
  
Consider for  $\eps >0$  an entropy solution  $(\vec{r}^\eps,\vec{m}^\eps) \in L^\infty(\R^d \times (0,T)) $  of  \eqref{Euler-Darcy-MS-eps}, \eqref{Euler-Darcy-MS-eps_ini}  and  a smooth solution $(\bar{\vec{r}},\bar{\vec{m}}) \in  C^{3,1}(\R^d \times [0,T) )    \times  C^{2,1}\R^d \times [0,T) )  $ of 
\eqref{Darcy-MS-limit},  \eqref{Darcy-MS-limit_ini}, supposed to take values in a convex, compact set  $\mathcal G \subset G$.\\  
Then there exist constants $c_i > 0, i=1,\dots,n$, such that for
\begin{align}\label{eq:weaklycoupled}
M_i \geq c_i \max_{j=1,\ldots,n}\max_{(r_i,r_j) \in  \mathcal{G}\times \mathcal{G}} \big\{|\lambda_{ij}(r_i,r_j)| \big\}
\end{align}
we have the estimate
\begin{equation}\label{finalestimate}\varphi_\eps(t) \leq K (\varphi_\eps(0)+\eps^4) \qquad (t\in (0,T]).
\end{equation}
Here  $K>0$ is a constant that depends only on $T$, $\mathcal{G}$    and    $\bar{\vec{r}}$ but not on $\eps$. 
\end{thm}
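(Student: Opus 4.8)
The plan is to run a Gronwall argument on the relative entropy functional $\varphi_\eps$ from \eqref{yardstick}, taking the dissipation identity of Proposition \ref{entprop} as the starting point. First I would insert into \eqref{entprop_formula} a test function of the form $\psi(\vec{x},\tau)=\phi(\tau)\omega(\vec{x})$, where $\omega\equiv 1$ on a ball large enough to contain the ($\eps$-uniform) supports of $\vec{r}^\eps-\hat{\vec{r}}$, $\vec{m}^\eps$, $\bar{\vec{r}}-\hat{\vec{r}}$ and $\bar{\vec{m}}$ furnished by \eqref{compactsupp}, \eqref{compsupppm}, \eqref{compam}, and where $\phi$ approximates $\chi_{[0,t]}$. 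Since the relative entropy and its flux vanish where $\omega$ fails to be constant, the $\nabla\psi$ contribution drops out, and letting $\phi\to\chi_{[0,t]}$ converts \eqref{entprop_formula} into
\begin{align*}
\varphi_\eps(t) \leq \varphi_\eps(0) - \frac{1}{\eps^2}\int_0^t\!\!\int_{\R^d} R_\eps \,\dd\vec{x}\,\dd\tau - \int_0^t\!\!\int_{\R^d}\big(Q_\eps + E_\eps\big)\,\dd\vec{x}\,\dd\tau,
\end{align*}
where I use that $\varphi_\eps$ is unchanged when $\eta$ is replaced by $\tilde\eta$, the relative entropy being insensitive to additive constants, and that $\psi(\cdot,0)=1$ on the support reproduces $-\varphi_\eps(0)$.

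The core of the argument is to bound the right-hand side by $C\int_0^t\varphi_\eps\,\dd\tau + C\eps^4$ after discarding genuinely dissipative contributions. I split $R_\eps=R_{1,\eps}+R_{2,\eps}$ as in \eqref{eq:defQER} and observe that $R_{1,\eps}\geq 0$ and that the leading quadratic part of $R_{2,\eps}$, namely $\tfrac12\sum_{i,j}\lambda_{ij}\rho^\eps_i\rho^\eps_j\big|(\tfrac{\vec{m}^\eps_i}{\rho^\eps_i}-\tfrac{\vec{m}^\eps_j}{\rho^\eps_j})-(\tfrac{\bar{\vec{m}}_i}{\bar{\rho}_i}-\tfrac{\bar{\vec{m}}_j}{\bar{\rho}_j})\big|^2$, is an entropy-production form with $\lambda_{ij}\geq 0$ off-diagonal and hence nonnegative; both are kept as negative dissipation. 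The two remaining lines of $R_{2,\eps}$ are indefinite, but every summand carries a factor $\tfrac{\bar{\vec{m}}_i}{\bar{\rho}_i}-\tfrac{\bar{\vec{m}}_j}{\bar{\rho}_j}=O(\eps)$, since $\bar{\vec{m}}=-\eps\,\mathcal{B}(\bar{\vec{r}})^{-1}\nabla\vec{p}(\bar{\vec{r}})$. Applying Young's inequality to each cross term, the resulting velocity-difference squares $\tfrac{1}{\eps^2}\big|\tfrac{\vec{m}^\eps_i}{\rho^\eps_i}-\tfrac{\bar{\vec{m}}_i}{\bar{\rho}_i}\big|^2$ are absorbed into $\tfrac{1}{\eps^2}R_{1,\eps}$ exactly when $M_i$ dominates the $|\lambda_{ij}|$ as demanded by \eqref{eq:weaklycoupled}, whereas the complementary density-difference squares $|\rho^\eps_j-\bar{\rho}_j|^2$ are controlled by $\eta(\cdot|\cdot)$ through \eqref{comparel} and yield a Gronwall term.

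For $Q_\eps$ I would combine Remark \ref{remark}(1), which states that its first factor is $O(1)$, with the flux bound \eqref{eq:passumfol}, obtaining $|Q_\eps|\leq C\,\eta(\cdot|\cdot)$, again a Gronwall term. For $E_\eps$ I would use $\bar{\vec{e}}_\eps=O(\eps)$ from Remark \ref{remark}(2) and Young's inequality against the dissipation $\tfrac{1}{\eps^2}R_{1,\eps}$: the velocity-difference part is absorbed, and the complementary part, being a bounded remainder of size $\eps^4$ integrated over the compact $\eps$-uniform support, accounts for the $\eps^4$ in \eqref{finalestimate}. Choosing the Young parameters and the constants $c_i$ so that all reintroduced multiples of $\tfrac{1}{\eps^2}R_{1,\eps}$ remain below the available dissipation, I reach
\begin{align*}
\varphi_\eps(t) \leq \varphi_\eps(0) + C\int_0^t \varphi_\eps(\tau)\,\dd\tau + C\eps^4,
\end{align*}
with $C$ depending only on $T$, $\mathcal{G}$ and $\bar{\vec{r}}$, and Gronwall's lemma then gives \eqref{finalestimate} with $K=Ce^{CT}$. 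The main obstacle is precisely the estimate of $R_{2,\eps}$: one must verify that the indefinite Maxwell--Stefan cross terms are genuinely $O(\eps)$ in the barred velocities and can be absorbed into the friction dissipation $R_{1,\eps}$, which is the analytic content of the weak-coupling hypothesis \eqref{eq:weaklycoupled}. The $Q_\eps$, $E_\eps$ and Gronwall steps are then routine given Remark \ref{remark}, \eqref{eq:passumfol} and the uniform bounds \eqref{eq:uniformbound}.
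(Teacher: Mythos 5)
Your proposal is correct and follows essentially the same route as the paper's proof: Proposition \ref{entprop} tested with a space--time cutoff, the bound $|Q_\eps|\leq C\,\eta(\cdot|\cdot)$ via Remark \ref{remark} and \eqref{eq:passumfol}, Young's inequality on $E_\eps$ played against the dissipation $\tfrac{1}{\eps^2}R_{1,\eps}$ to produce the $O(\eps^4)$ term, and Gronwall's lemma to conclude \eqref{finalestimate}. The one place where you go beyond the paper is the treatment of $R_{2,\eps}$: the paper simply writes the integrand as $\bigl(1-\tfrac{1}{2C_2}\bigr)R_{1,\eps}+R_{2,\eps}$ and asserts its positivity under \eqref{eq:weaklycoupled}, whereas your argument --- keeping the nonnegative quadratic part $\tfrac12\sum_{i,j}\lambda_{ij}\rho^\eps_i\rho^\eps_j\bigl|\bigl(\tfrac{\vec{m}^\eps_i}{\rho^\eps_i}-\tfrac{\vec{m}^\eps_j}{\rho^\eps_j}\bigr)-\bigl(\tfrac{\bar{\vec{m}}_i}{\bar{\rho}_i}-\tfrac{\bar{\vec{m}}_j}{\bar{\rho}_j}\bigr)\bigr|^2$, then Young-splitting the indefinite cross terms using the $O(\eps)$ factor $\tfrac{\bar{\vec{m}}_i}{\bar{\rho}_i}-\tfrac{\bar{\vec{m}}_j}{\bar{\rho}_j}$, absorbing the velocity-difference squares into $R_{1,\eps}$ under \eqref{eq:weaklycoupled} and routing the density-difference squares into the Gronwall term via \eqref{comparel} --- is precisely the detail needed to make that step rigorous, since the cross terms in \eqref{eq:defQER} carry factors $(\rho^\eps_j-\bar{\rho}_j)$ that cannot be dominated pointwise by $R_{1,\eps}$ alone, no matter how large the $M_i$ are. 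So where your proof deviates, it actually supplies an argument the paper leaves implicit; everything else matches the paper's proof step for step.
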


\begin{rem}
\begin{enumerate}
\item If the initial datum $(\vec{r}^\eps_0, \vec{m}^\eps_0) $ converges  for $\eps \to 0$ to  $ (\bar{\vec{r}}_0, \bar{\vec{m}}(\cdot,0) )  $ 
 in  $L^2_{loc}(\R^d)$  the estimate  \eqref{finalestimate}   implies   
 \[   
    {\|(\vec{r}^\eps, \vec{m}^\eps)(\cdot,t)   -  (\bar{\vec{r}}, \bar{\vec{m}}  )(\cdot,t)  \|}_{L^2(\R^d)} \to 0,
 \]
   due to the  compatibility relation \eqref{comparel}.

\item 
The condition \eqref{eq:weaklycoupled} holds especially for $\lambda_{ij}=0, \ i,j=1,\dots,n$ what corresponds exactly to \cite{Tzavaras}. In gaseous mixtures \eqref{eq:weaklycoupled} is expected to hold, see Remark \ref{rem:model}.
\end{enumerate}
\end{rem}

\textit{Proof (of Theorem  \ref{thm:paraboliclimit}).} For the proof we write again $(\vec{r},\vec{m})=(\vec{r}^\eps,\vec{m}^\eps)$.

We consider the  relative entropy statement from \eqref{entprop_formula} in Proposition \ref{entprop} which holds also for the entropy pair $(\tilde {\eta}, \tilde{\vec{q}} )$. As test function $\psi $  we make the choice  $\psi(\vec{x},t) = \theta_\kappa(\tau)\omega_R(\vec{x}) $ with 
$\theta_\kappa$  given for $\kappa >0$  by
\begin{align}\label{eq:deftheta}
\theta_\kappa(\tau) := \begin{cases}
1, & 0\leq\tau <t,  \\
\frac{t-\tau}{\kappa}+1, & t \leq \tau < t+\kappa,\\
0, & \tau \geq t+\kappa,
\end{cases}
\end{align} 
and with $\omega_R$  given for $R,\delta>0$  by
\begin{align*}
\omega_R(\vec{x}) = \begin{cases}
1, & |\vec{x}|<R, \\
1+\frac{R-|\vec{x}|}{\delta}, & R < |\vec{x}| < R+\delta, \\
0, & \text{ else.}
\end{cases}
\end{align*}
By taking the limit $R \to \infty$, using the asymptotic properties \eqref{compactsupp}, \eqref{compsupppm} of $(\vec{r},\vec{m})$ and $(\bar{\vec{r}},\bar{\vec{m}})$, and finally sending $\kappa \to 0$ we obtain
using the definition of $\phi_\eps$ from \eqref{yardstick} the inequality
\begin{align}\label{2.34}
\varphi_\eps(t) + \frac{1}{\eps^2}\int_0^t\int_{\R^d} R_\eps(\vec{r},\vec{m},\bar{\vec{r}},\bar{\vec{m}})\dd \vec{x} \dd \tau \leq \varphi_\eps(0) +\int_0^t \int_{\R^d} \big(|Q_\eps|+|E_\eps| \big) \dd \vec{x} \dd\tau,
\end{align}
with $Q_\eps,E_\eps,$ and $R_\eps$ from \eqref{eq:defQER} in Proposition \ref{entprop}. 

Due to Remark \ref{remark} and \eqref{eq:passumfol} it holds
\begin{align*}
\int_0^t\int_{\R^d} |Q_\eps| \dd\vec{x}\dd\tau \leq C_1 \int_0^t \varphi_\eps(\tau)\dd\tau,
\end{align*}
where $C_1>0$ depends on the $L^\infty$-norm of $\nabla \bar{\vec{r}}$ but not on $\eps$.
The error term $E_\eps$ can be estimated for any number $C_2 >0$  with Young's inequality by
\[
\begin{array}{rcl}
\displaystyle \int_0^t \int_{\R^d} |E_\eps| \dd \vec{x}\dd\tau &\leq&  \displaystyle \frac{C_2\eps^2}{2} \int_0^t\int_{\R^d} \sum_{i=1}^n \left|\frac{\bar{\vec{e}}_{\eps,i}}{\bar{\rho}_i}\right|^2\rho_i \dd \vec{x} \dd\tau  \\
&&\displaystyle  + \frac{1}{2C_2\eps^2}\int_0^t\int_{\R^d} \sum_{i=1}^n M_i\rho_i \left|\frac{\vec{m}_i}{\rho_i}-\frac{\bar{\vec{m}}_i}{\bar{\rho}_i}\right|^2 \dd \vec{x}\dd\tau.
\end{array}
\]
Additionally, we have from \eqref{eq:eOeps} with $\bar{\vec{e}}_\eps = O(\eps)$   (see Remark \ref{remark}) the inequality
\begin{align*}
\int_0^t\int_{\R^d} \sum_{i=1}^n \left|\frac{\bar{\vec{e}}_{\eps,i}}{\bar{\rho}_i}\right|^2 \rho_i \dd \vec{x}\dd\tau &\leq \sum_{i=1}^n\left(\left\|\frac{\bar{\vec{e}}_{\eps,i}}{\bar{\rho}_i}\right\|^2_{L^\infty} \int_0^t \int_{\R^d}\left|\rho_i-\hat{\rho}_{i}\right|\dd \vec{x}\dd\tau \right.\\
&\qquad  \qquad \qquad \qquad   \left.+ |\hat{\rho}_{i}|\int_0^t\int_{\R^d} \left|\frac{\bar{\vec{e}}_{\eps,i}}{\bar{\rho}_i}\right|^2 \dd \vec{x} \dd\tau\right) \\
&\leq C_3\eps^2t,
\end{align*}
where the constant $C_3>0$ depends on $T,K_1$ from \eqref{eq:uniformbound}, $\mathcal{G}$, and also on $\bar{\vec{r}}$ through \eqref{eq:eOeps}.

Plugging these estimates into \eqref{2.34} leads to 
\begin{align*}
\hspace*{-1.5cm}\varphi_\eps(t) + \frac{1}{\eps^2}\int_0^t\int_{\R^d} R_\eps(\vec{r},\vec{m},\bar{\vec{r}},\bar{\vec{m}})-\frac{1}{2C_2}R_{1,\eps}(\vec{r},\vec{m},\bar{\vec{r}},\bar{\vec{m}})\dd \vec{x}\dd\tau \\ \leq \varphi_\eps(0) + C_1 \int_0^t \varphi_\eps(\tau) \dd\tau + C_3 \eps^4 t.
\end{align*}

We need the integral on the left hand side  of the last estimate to be positive. The integrand reads as 
$$ R_\eps(\vec{r},\vec{m},\bar{\vec{r}},\bar{\vec{m}})-\frac{1}{2C_2}R_{1,\eps}(\vec{r},\vec{m},\bar{\vec{r}},\bar{\vec{m}}) = \left(1-\frac{1}{2C_2}\right) R_{1,\eps} + R_{2,\eps}.$$
The term $R_{1,\eps}$ is positive and scales with the mobilities $M_i$, whereas the term $R_{2,\eps}$ can have arbitrary sign and scales with the diffusion coefficients $\lambda_{ij}$.

Hence, if the first term dominates, we can assure positivity of the integral. This follows with \eqref{eq:defQER}, \eqref{eq:weaklycoupled}
and choosing $C_2$ sufficiently large.  Then  Gronwall's inequality yields a constant $K>0$ such that 
\[\varphi_\eps(t) \leq K(\varphi_\eps(0)+\eps^4), \quad t\in(0,T].\]
\hfill\qedhere

\section{Conclusions}\label{sec:conclusions}
We have presented how to derive the system \eqref{Euler-MS} in such a way that it automatically satisfies an entropy inequality and hence fulfills the second law of thermodynamics. There exist smooth solutions globally in time to this system if the smooth initial data are close enough to an equilibrium. In an asymptotic time regime we show the convergence to a parabolic limit system generalizing results on the single-component case.

\section*{Acknowledgments}
The authors kindly acknowledge the financial support of this work by the Deutsche Forschungsgemeinschaft (DFG)
in the frame of the International Research Training Group "Droplet Interaction Technologies" (DROPIT).

\appendix
\renewcommand{\theequation}{\Alph{section}.\arabic{equation}}
\section{Differential Operators and Matrix Algebra\label{sec:appA}}
We collect some definitions from vector analysis and matrix algebra which are frequently used in Sections \ref{sec:mod}--\ref{sec:limit}.\\
For some  vector $\vec{u}(\vec{x}) = (u_1(\vec{x}),\dots,u_n(\vec{x}))^\top \in \R^n$ the (generalized) gradient a is defined as 
\begin{equation}\label{gengrad}
\nabla \vec{u}(\vec{x}) \coloneqq (\nabla u_1(\vec{x}),\dots \nabla u_n(\vec{x}))^\top \in \R^{nd}, 
\end{equation}
and for $\vec{v}(\vec{x}) = (\vec{v}_1^\top(\vec{x}),\dots, \vec{v}_n^\top(\vec{x}))^\top \in \R^{nd}$ the (generalized) divergence is given by
\begin{equation}
\div \vec{v}(\vec{x})\coloneqq \sum_{i=1}^n \div(\vec{v}_i(\vec{x})).
\end{equation}

By  $\otimes$  we denote the Kronecker product of two matrices, i.e., 
with $A\in \R^{m\times n}, B\in \R^{p \times q}$
\begin{align} A\otimes B &:= \begin{pmatrix}
a_{11} B & \dots & a_{1n} B \\
\vdots & \ddots & \vdots \\
a_{m1} B & \cdots & a_{mn} B
\end{pmatrix} \in \R^{mp\times nq}, \label{def:kronecker}\\
\intertext{and by $\odot$ the entrywise product for matrices of identical dimensions.
	We define $\blockdiag(\vec{x}_i),$ with $\vec{x}_i \in \R^d, i=1,\dots,n$, as }
\blockdiag(\vec{x}_i) &:=\begin{pmatrix}
\vec{x}_1 & 0_{d\times 1} & \cdots & \cdots &0_{d\times 1} \\
0_{d\times 1} & \vec{x}_2 & 0_{d\times 1} & \cdots & 0_{d\times 1} \\
\vdots & \ddots & \ddots & \ddots & \vdots\\
\vdots & \ddots & \ddots & \ddots & 0_{d\times 1} \\
0_{d\times 1} & \cdots & \cdots & 0_{d\times 1} & \vec{x}_n
\end{pmatrix}\in \R^{nd\times n}. \nonumber \\
\intertext{In addition, with matrices $A_i\in \R^{d\times d}, \ i=1,\dots,n$, let}
\blockdiag(A_i) &:=\begin{pmatrix}
A_1 & 0_{d\times d} & \cdots & 0_{d\times d}\\
0_{d\times d} & \ddots & \ddots & \vdots\\
\vdots & \ddots & \ddots & 0_{d\times d}\\
0_{d\times d} & \cdots & 0_{d\times d} & A_n
\end{pmatrix} \in \R^{nd\times nd}. \nonumber
\end{align}

We conclude with  the following rules  for the generalized gradient defined in \eqref{gengrad}. 
\begin{lem}\label{lem:rechenregel}
For smooth functions $\alpha \colon \R^d \to \R, \vec{a} \colon \R^d \to \R^{n}, $ $\vec{b} \colon \R^d \to \R^n, $ and $\vec{c} \colon \R^n \to \R^{n}$ it holds 
\begin{align*}
\nabla(\alpha(\vec{x})\vec{a}(\vec{x})) &= \vec{a}(\vec{x}) \otimes \nabla \alpha(\vec{x}) + \alpha(\vec{x}) \nabla \vec{a}(\vec{x}), \\
\nabla(\vec{c}(\vec{b}(\vec{x})) &= (\DD_\vec{b}\vec{c}(\vec{b}(\vec{x}))\otimes \mathcal{I}_d) \nabla \vec{b}(\vec{x}).
\end{align*}
\end{lem}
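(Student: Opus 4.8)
The plan is to verify both identities componentwise, reducing each to the classical scalar Leibniz rule (respectively the multivariate chain rule) applied entry by entry, and then to reassemble the resulting $d$-dimensional blocks into the Kronecker-product form asserted on the right-hand sides. No estimates are involved; everything is an exact algebraic identity, so the whole argument is a careful bookkeeping of indices against the definitions \eqref{gengrad} of the generalized gradient and \eqref{def:kronecker} of the Kronecker product.

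For the first identity I would write $\alpha\vec{a} = (\alpha a_1,\dots,\alpha a_n)^\top$ and apply \eqref{gengrad}, so that the $i$-th $d$-dimensional block of $\nabla(\alpha\vec{a})$ is the ordinary gradient $\nabla(\alpha a_i)$. The scalar product rule gives $\nabla(\alpha a_i) = a_i\nabla\alpha + \alpha\nabla a_i$. Stacking these $n$ blocks, the first summand $a_i\nabla\alpha$ is exactly the $i$-th block of $\vec{a}\otimes\nabla\alpha$: since $\vec{a}\in\R^n$ and $\nabla\alpha\in\R^d$, the product $\vec{a}\otimes\nabla\alpha\in\R^{nd}$ stacks $a_1\nabla\alpha,\dots,a_n\nabla\alpha$ by \eqref{def:kronecker}, while the second summand $\alpha\nabla a_i$ is the $i$-th block of $\alpha\nabla\vec{a}$. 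This gives the claim blockwise.

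For the second identity, writing $\vec{c} = (c_1,\dots,c_n)^\top$ with $c_k\colon\R^n\to\R$, the $k$-th block of $\nabla(\vec{c}\circ\vec{b})$ is $\nabla\big(c_k(\vec{b}(\cdot))\big)$, and the multivariate chain rule yields $\nabla\big(c_k(\vec{b}(\vec{x}))\big) = \sum_{j=1}^n \frac{\del c_k}{\del b_j}(\vec{b}(\vec{x}))\,\nabla b_j(\vec{x})\in\R^d$. I would then observe that the asserted right-hand side $(\DD_\vec{b}\vec{c}\otimes\mathcal{I}_d)\nabla\vec{b}$ is a block matrix-vector product whose $(k,j)$ block is $\frac{\del c_k}{\del b_j}\,\mathcal{I}_d$ acting on the block $\nabla b_j$ of $\nabla\vec{b}$; its $k$-th block is therefore $\sum_{j=1}^n \frac{\del c_k}{\del b_j}\,\nabla b_j$, matching the chain-rule expression block by block. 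Since the Jacobian $\DD_\vec{b}\vec{c}\in\R^{n\times n}$ has entries $(\DD_\vec{b}\vec{c})_{kj} = \frac{\del c_k}{\del b_j}$, the identification is immediate.

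These computations are entirely routine, and the main (essentially the only) obstacle is notational: one must confirm that the ordering conventions in \eqref{gengrad} and \eqref{def:kronecker} are mutually compatible, so that the $i$-th (respectively $k$-th) $d$-block on each side of the equation genuinely refers to the same index. Once this index matching is pinned down, both rules follow directly, and they are exactly the form needed to differentiate the compositions appearing in the proof of Proposition \ref{entprop}.
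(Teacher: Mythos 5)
Your proposal is correct and follows essentially the same route as the paper's own proof: a blockwise computation applying the scalar product rule and the multivariate chain rule to each $d$-dimensional block of the generalized gradient \eqref{gengrad}, then identifying the stacked blocks with the Kronecker-product expressions via \eqref{def:kronecker}. The index bookkeeping you describe matches the paper's displayed calculation exactly.
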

\begin{proof}
We have \begin{align*}
\nabla(\alpha(\vec{x})\vec{a}(\vec{x})) &= \begin{pmatrix}
\nabla (\alpha(\vec{x})a_1(\vec{x})) \\
\vdots\\
\nabla (\alpha(\vec{x})a_n(\vec{x})) \\
\end{pmatrix} = \begin{pmatrix}
\nabla (\alpha(\vec{x})) a_1(\vec{x}) + \alpha(\vec{x})\nabla a_1(\vec{x}) \\
\vdots \\
\nabla (\alpha(\vec{x})) a_n(\vec{x}) + \alpha(\vec{x})\nabla a_n(\vec{x}) \\
\end{pmatrix} \\[2ex]&= \vec{a}\otimes\nabla \alpha(\vec{x}) + \alpha(\vec{x}) \nabla \vec{a}(\vec{x})
\end{align*}
and
\begin{align*}
\nabla(\vec{c}(\vec{b}(\vec{x})) &= \begin{pmatrix}
\nabla c_1(\vec{b}(\vec{x})) \\
\vdots \\
\nabla c_n(\vec{b}(\vec{x})) 
\end{pmatrix}
= \begin{pmatrix}
\sum_{i=1}^n\DD_{b_i} c_1(\vec{b}(\vec{x}))\nabla b_i(\vec{x})   \\[2ex]
\vdots \\
\sum_{i=1}^n\DD_{b_i} c_n(\vec{b}(\vec{x}))\nabla b_i(\vec{x})
\end{pmatrix} \\[2ex]
&= \begin{pmatrix}
D_{b_1}c_1(\vec{b}(\vec{x})) \mathcal{I}_d &\ldots& D_{b_n}c_1(\vec{b}(\vec{x}))\mathcal{I}_d \\
\vdots& & \vdots \\
D_{b_1}c_n(\vec{b}(\vec{x}))\mathcal{I}_d &\ldots &D_{b_n}c_n(\vec{b}(\vec{x}))\mathcal{I}_d
\end{pmatrix} \nabla \vec{b}(\vec{x})\\[2ex]
 &= (D_\vec{b}\vec{c}(\vec{b}(\vec{x})) \otimes  \mathcal{I}_d) \nabla \vec{b}(\vec{x}).
\end{align*}
\end{proof}

\section{Global Classical Well-posedness of  IVPs for Hyperbolic Balance Laws\label{sec:appB}}

Let the state space $G \subset \R^{n(d+1)}$ be open and denote by $U: \R^d \times [0,T) \to G$  the  function that satisfies 
for some $T>0 $ the IVP for the nonlinear system of balance laws given by 
\begin{align}
	\del_tU + \sum_{\alpha=1}^d \del_{x_\alpha}F_\alpha(U)&= S(U)  \text{ in } \R^d\times (0,T). \label{yong1}
\end{align}
Here  $S : G \to \R^{n(d+1)}$ and $F_\alpha: G \to \R^{n(d+1)}, \ \alpha=1,\dots,d$ are  smooth functions with
\[ S(U) = \begin{pmatrix}
\vec{0} \\ \vec{s}(U)
\end{pmatrix}. \]
From now on we assume that $U$ can be split  according to $U=(\vec{r}^\top,\vec{m}^\top)^\top,$ with $\vec{r} \in \R^n, \vec{m}\in \R^{nd}$.
The system \eqref{yong1} can then be written as
\begin{align}\label{yong2}
	\del_t\begin{pmatrix}
		\vec{r}\\\vec{m}
	\end{pmatrix} + \sum_{\alpha=1}^d \del_{x_\alpha}F_\alpha(\vec{r},\vec{m})&= \begin{pmatrix} \vec{0} \\ \vec{s}(\vec{r},\vec{m}) \end{pmatrix}.
\end{align}

\begin{thm}[\cite{yong}]\label{yongtheorem}
	Let $s \geq s_0+1 = \lfloor d/2\rfloor +2$ be an integer and $\hat{U} \in G$ be a constant equilibrium state such that 
	 the following conditions hold:
	\begin{enumerate}
		\item The Jacobian $\DD_\vec{m} \vec{s}(\hat{U}) \in \R^{nd \times nd}$ is invertible.
		\item There exists a strictly convex smooth entropy function $\eta: \mathcal{G} \to \R$, defined in a convex, compact neighborhood $\mathcal{G} \subset G$
		 of $\hat{U}$, such that $\DD^2\eta(U)\DD F_{\alpha}(U)$ is symmetric for all $U\in \mathcal{G}$ and all $\alpha=1,\dots,d$.
		\item There is a positive constant $c_\mathcal{G}$ such that for all $U\in \mathcal{G}$,
		\[ [\DD\eta(U)-\DD\eta(\hat{U})]S(U) \leq -c_\mathcal{G} |S(U)|^2, \]
		where $|\cdot|$ denotes the  Euclidean norm of a vector. 
		\item The kernel $ \ker(\DD S(\hat{U}))$ of the Jacobian $\DD S(\hat{U})$ contains no eigenvector of the matrix \\$\sum_{\alpha=1}^d \omega_\alpha \DD F_{\alpha}(\hat{U})$, for any $\omega = (\omega_1,\dots,\omega_d) \in \mathbb{S}^{d-1}$.
	\end{enumerate}
	Then there is a constant $c_1>0$ such that for  $U_0\in H^s(\R^d)$ with  
	\[ \|U_0-\hat{U}\|_s \leq c_1 \]
	the  system of balance laws \eqref{yong2} with $U_0$ as its initial value has a unique global solution $U=U(\vec{x},t) \in C([0,T);H^s(\R^d))$ satisfying
	\begin{align*}
		\|U(\cdot,T)-\hat{U}\|_s^2+ \int_0^T\|S(U)(\cdot,t)\|_s^2 \dt + \int_0^T \|\nabla U(\cdot,t)\|^2_{s-1} \dt \leq c_2 \|U_0-\hat{U}\|_s^2
	\end{align*}
	for any $T>0$ and some $c_2>0$.
\end{thm}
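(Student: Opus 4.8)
This is the abstract well-posedness theorem of Yong that underlies the structural conditions checked in the proof of Theorem~\ref{thmregu}; it sits in the Kawashima--Shizuta framework for hyperbolic balance laws endowed with a dissipative, entropy-compatible source. The plan is to use the strictly convex entropy $\eta$ as a symmetrizer, to establish a local solution by symmetric-hyperbolic theory, and then to close a uniform-in-time a~priori energy estimate for small data. First I would observe that condition~2 makes $A_0(U):=\DD^2\eta(U)$ a symmetric positive-definite matrix with $A_0\DD F_\alpha=(A_0\DD F_\alpha)^\top$, so multiplying the quasilinear form of \eqref{yong2} by $A_0$ produces a symmetric hyperbolic system. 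Standard theory (Kato/Majda) then yields for $U_0-\hat{U}\in H^s$, $s>d/2+1$, a unique solution $U\in C([0,T^\ast);H^s)$ together with the continuation criterion that the solution persists as long as $\|U(t)-\hat{U}\|_s$ stays bounded and $U(t)$ remains in the structural neighborhood $\mathcal G$.

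The core is a uniform a~priori estimate. I would work with the relative entropy $\eta(U|\hat{U})$, comparable to $|U-\hat{U}|^2$ by strict convexity. Using the compatibility condition \eqref{eq:compat} and $S(\hat{U})=0$, multiplying the balance law by $\DD\eta(U)-\DD\eta(\hat{U})$ puts the flux into divergence form, so after integration over $\R^d$ only the source survives and condition~3 gives the zeroth-order dissipation $\frac{d}{dt}\int\eta(U|\hat{U})\,\dd\vec{x}\le-c_{\mathcal G}\int|S(U)|^2\,\dd\vec{x}$. For the higher derivatives I would differentiate the symmetrized system by $\del^a$, $|a|\le s$, contract with $A_0\del^a U$ and integrate by parts: the symmetric principal part produces no growth, the nonlinear commutators are controlled by Moser and Gagliardo--Nirenberg estimates, and the linearization of the source at $\hat{U}$ supplies damping of the momentum derivatives through the invertibility in condition~1 (here in fact $\DD_\vec{m}\vec{s}(\hat{U})\prec0$).

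The decisive difficulty is that $S=(\vec{0},\vec{s})^\top$ dissipates only the momentum block, while the density block $\vec{r}$ is conserved and receives no direct damping, so none of the estimates above controls $\nabla\vec{r}$. This is precisely where condition~4, the genuine-coupling condition, is needed. Passing to Fourier variables, the linearized symbol $i|\xi|\sum_\alpha\omega_\alpha\DD F_\alpha(\hat{U})-\DD S(\hat{U})$ admits by condition~4 a skew-symmetric compensating matrix $K(\omega)$ whose commutator with the flux symbol is positive definite on $\ker(\DD S(\hat{U}))$. I would use $K$ to build a corrected energy functional $\mathcal E\simeq\|U-\hat{U}\|_s^2$ whose time derivative controls the full gradient, yielding $\frac{d}{dt}\mathcal E+c\big(\|S(U)\|_s^2+\|\nabla U\|_{s-1}^2\big)\le(\text{cubic remainder})$. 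Constructing this compensating function and transferring dissipation from $\vec{m}$ to $\vec{r}$ through the hyperbolic coupling, then reassembling the frequency-space gain into a clean physical-space energy identity, is the main obstacle.

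Finally, for $\|U_0-\hat{U}\|_s\le c_1$ small the cubic remainder is absorbed into the dissipation, so a continuity/Gronwall argument gives $\|U(t)-\hat{U}\|_s^2+\int_0^t\big(\|S(U)\|_s^2+\|\nabla U\|_{s-1}^2\big)\,\dd\tau\le c_2\|U_0-\hat{U}\|_s^2$ uniformly in $t$. The smallness keeps $U(t)$ inside $\mathcal G$ for all time, so the continuation criterion upgrades the local solution to a global one and yields exactly the stated estimate.
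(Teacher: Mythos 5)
This theorem is imported verbatim from \cite{yong}; the paper itself states it without proof in Appendix \ref{sec:appB}, so the only meaningful baseline is Yong's original argument. Your sketch reproduces precisely that argument's architecture --- entropy symmetrization with $A_0=\DD^2\eta$ and Kato--Majda local existence, zeroth-order relative-entropy dissipation from condition~3, Moser-type higher-order commutator estimates with momentum damping from condition~1, and the Shizuta--Kawashima compensating matrix obtained from condition~4 to transfer dissipation to the undamped density block, closed by Gronwall and continuation for small data --- so it is essentially the same approach, with the one caveat that you correctly identify but leave open the construction of $K(\omega)$ and the assembly of the corrected energy functional, which is exactly where the bulk of the technical work in \cite{yong} resides.
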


\bibliographystyle{abbrv}
\bibliography{Literatur}

\end{document}